\newtheorem{assumption}[property]{Assumption}
\newcommand{\beq}{\begin{equation}}
\newcommand{\eeq}{\end{equation}}
\newcommand{\beqa}{\begin{eqnarray}}
\newcommand{\eeqa}{\end{eqnarray}}
\newcommand{\beqs}{\begin{equation*}}
\newcommand{\eeqs}{\end{equation*}}
\newcommand{\beqas}{\begin{eqnarray*}}
\newcommand{\eeqas}{\end{eqnarray*}}
\newcommand{\R}{\mathbb R}
\newcommand{\hesslb}{c}
\newcommand{\hessub}{C}
\newcommand{\gammalb}{{\underline{\gamma}}}
\newcommand{\gammaub}{{\overline{\gamma}}}
\newcommand{\gradfjxk}{{\nabla f_j (x_j^k)}}
\newcommand{\hessfjxk}{{\nabla^2 f_j (x_j^k)}}
\newcommand{\gradfi}{{\nabla f_i}}
\newcommand{\bik}{{B_i^k}}
\newcommand{\hatk}{\hat{k}}
\newcommand{\hata}{\hat{A}}
\newcommand{\hatr}{\hat{\rho}}
\newcommand{\hatg}{\hat{\gamma}}
\newcommand{\p}{{\bar{\xi}}} 
 \journalname{Noname}
\begin{document}

\title{A globally convergent incremental Newton method
}

\titlerunning{Incremental Newton}        

\author{M. G\"urb\"uzbalaban \and \\ A. Ozdaglar \and \\ P.~Parrilo}


\institute{M. G\"urb\"uzbalaban \and A. Ozdaglar \and P. Parrilo \at
              Laboratory for Information and Decision Systems, \\ 
              Massachusetts Institute of Technology, Cambridge, MA, 02139.  \\
              \email{\{mertg, asuman, parrilo\}@mit.edu} 
 \\
             {* This work was supported by AFOSR MURI FA9550-09-1-0538 and ONR Basic Research Challenge No. N000141210997.                          }   
}

\date{Received: date / Accepted: date}

\maketitle

\begin{abstract}
Motivated by machine learning problems over large data sets and distributed optimization over networks,
we develop and analyze a new method called incremental Newton method for minimizing the sum of a large number of strongly convex functions. We show that our method is globally convergent for a variable stepsize rule. We further show that under a gradient growth condition, convergence rate is linear  for both  variable and constant stepsize rules. By means of an example, we show that without the gradient growth condition, incremental Newton method cannot achieve linear convergence. Our analysis can be extended to study other incremental methods: in particular, we obtain a linear convergence rate result for the incremental Gauss-Newton algorithm under a variable stepsize rule.
\keywords{Incremental methods \and Convex optimization \and Newton method \and Gauss-Newton method \and Strong convexity \and EKF algorithm}
\end{abstract}

\section{Introduction}
\label{intro}
We consider the following unconstrained optimization problem where the objective function is the sum of component functions:
\beqa\label{pbm-multi-agent}
	\mbox{minimize  }&& f(x) = \sum_{i=1}^m f_i(x)  \\
	\mbox{subject to}&& x \in \R^n, \nonumber
\eeqa
where each $f_i:\R^n \to \R$ is a strongly convex and twice continuously differentiable function. This problem arises 
in many applications including least squares or more general parameter estimation problems (where $f_i(x)$ is the loss function representing the error between prediction of a parametric model obtained from data and the actual output), distributed optimization over networks (where $f_i(x)$ is the local objective function of agents connected through a network), dual formulation of problems with many constraints, and minimization of expected value of a function (where the expectation is taken over  a finite probability distribution or approximated by an $m$-sample average) (see e.g., \cite{bertsekas2011incremental,RamNedicVeer2007,Jordan13DistLearningApi,Boyd2011AdmmBook,Dickenstein2014,NedicOzdaglar09,Nedic2007rate,BottouLecun2005}). An important feature of these problems is that the number of component functions $f_i$ is large and not all simultaneously available. One is therefore interested in optimization algorithms that can iteratively update the estimate for an optimal solution using partial information about component functions.

One widely studied approach is the incremental gradient method, which cycles through the component functions using a deterministic order and updates the iterates using the gradient of a single component function. This method typically outperforms non-incremental methods in numerical studies since each inner iteration makes reasonable progress. However, it typically has sublinear convergence rate as it requires the stepsize to go to zero to obtain convergence to the optimal solution of problem (\ref{pbm-multi-agent}) (see \cite{bertsekas2011incremental}).

In this paper, we present an incremental Newton (IN) method that cycles deterministically through the component functions $f_i$ and uses the gradient of $f_i$ to determine the direction of motion and the Hessian of $f_i$ 
to construct the Hessian of the sum of component functions, $f$. Our main results can be summarized as follows:

First, we adopt a variable stepsize rule, which was introduced in Moriyama \textit{et al}. \cite{AlgEkfs2003} for the analysis of the incremental Gauss-Newton method with an adaptive stepsize rule. The stepsize measures the progress of the iterates over a cycle relative to the progress in the inner iterations and aims to dampen the oscillations associated with incremental methods in the ``region of confusion" (i.e., the set over which the component functions have non-aligned gradients; see e.g. \cite[Example 1.5.5]{Bertsekas99nonlinear}). We show that our IN algorithm is globally convergent with this variable stepsize rule. 

Second, we adopt an assumption, which we refer to as the {\it gradient growth} assumption, which states that norms of gradients of $f_i$'s are bounded from above by a linear function of the norm of $f$. Under this assumption we show that the normalized stepsize sequence (normalization of stepsize by the iteration number $k$ is used since the Hessians are accumulated at each step) remains bounded away from zero and provide an explicit characterization of this bound in terms of problem parameters. Our analysis relies on viewing the IN method as an inexact perturbed Newton method. We use the lower and upper bounds on the stepsize sequence together with bounds on the Hessian of iterates to provide bounds on the Hessian error and the gradient error of the method. This allows us to use the convergence rate results on inexact perturbed Newton methods to show that 
IN method converges locally linearly to the optimal solution of problem (\ref{pbm-multi-agent}). Under some additional assumptions, we show that IN method achieves asymptotically error-free curvature (or Hessian matrix of $f$) estimates which do not extend to many incremental quasi-Newton methods (see Remark \ref{rema-hessian-accurate-at-limit}). However, our global convergence and linear convergence rate results admit extensions to incremental quasi-Newton methods. Our analysis can also be extended to study \textit{incremental Gauss-Newton} method under a variable stepsize rule for solving least square problems, also known as the extended Kalman filter (EKF) method with variable stepsize or equivalently the EKF-S algorithm \cite{AlgEkfs2003}, and shows linear convergence rate for this method, thus answering a problem left open in Moriyama \textit{et al}. \cite[\S 7]{AlgEkfs2003}. Note that the incremental Gauss-Newton method without the variable stepsize shows typically sublinear convergence behavior \cite{Bertsekas1996incremental, Davidon76, Bertsekas99nonlinear}.

Third, we show that under the gradient growth condition, the IN method converges globally linearly to the optimal solution for a sufficiently small constant stepsize. The analysis of our algorithm under a constant stepsize rule uses bounds on the gradient errors, which may be of independent interest. 

Fourth, we provide an example that shows that without the gradient growth condition, IN method cannot converge faster than sublinear, thus highlighting the importance of gradient growth condition in the performance of the IN method.

Our work is related to the literature on incremental gradient (IG) methods (see \cite{Solodov98IncrGrad,Bertsekas99nonlinear,bertsekas2011incremental,BersekasIncrGrad97}). The randomized version of the IG method, also referred to as the {\it stochastic gradient descent} (SGD) \cite{RobbinsMonro,Schraudolph2007stochastic,BottouLecun2005}, has been popular and used extensively for solving machine learning problems \cite{Borges2010BookChapter,BottouLecun2005,Zhang2004}. Many variants of the IG method are proposed to accelerate its convergence, including the \textit{IG method with momentum} of Tseng  \cite{TsengIncrGradient98} and Mangasarian \textit{et al}. \cite{ManSol98IncGradMomentum}. Tseng's approach with momentum \cite{TsengIncrGradient98} requires once in a while constructing the gradient of $f$, and can be hard to implement in problems where the entire objective function is not available. Another interesting class of methods includes the  \textit{incremental aggregated gradient} (IAG) method of Blatt {\it et al.} \ (see \cite{Blatt2007incremental,TsengYun2014incremental}) and closely-related stochastic methods including the \textit{stochastic average gradient} (SAG) method \cite{Leroux2012sgd}, the SAGA method \cite{BachSagaMethod14} and the MISO method \cite{MairalSurrogate2013}. These methods process a single component function at a time as in incremental methods, but keeps a memory of the most recent gradients of all component functions so that a full gradient step 
is taken at each iteration. They have been shown to have fast convergence properties but may require an excessive amount of memory when $m$ is large.

There has also been a recent interest in incremental and stochastic quasi-Newton methods, motivated by numerical evidence showing that second-order methods are faster than first-order methods in many practical problems \cite{StocBFGSRibeiro14,StocBFGSNocedal14,Bordes2009sgdqn,Dickenstein2014}. 
In particular, Mokhtari \textit{et al.} propose a stochastic BFGS algorithm with a $O(1/k)$ convergence result \cite{StocBFGSRibeiro14}. Byrd \textit{et al.} \cite{StocBFGSNocedal14} develop a stochastic quasi-Newton algorithm that avoids the potentially harmful effects of differencing stochastic gradients that can be noisy, although no convergence analysis is given. SGD-QN algorithm \cite{Bordes2009sgdqn}, AdaGrad algorithm \cite{Duchi2011AdaGrad}, oBFGS and oLBFGS algorithms~\cite{Schraudolph2007stochastic}, SFO algorithm \cite{Dickenstein2014} are among other recent second-order stochastic methods that use quasi-Newton approaches. DANE algorithm \cite{SrebroDane2013} is a Newton-like method based on mirror-descent type updates with a linear convergence rate when the functions $f_i$ are quadratics, although to the best of our knowledge no convergence rate results are currently known beyond quadratic objective functions.
 
\paragraph{Outline.} In Section \ref{sec:incr-gauss-newton}, we motivate and introduce the IN method deriving key lemmas for its analysis. In Section \ref{sec-conv-variable-stepsize}, first we show its global convergence under a variable stepsize rule. Then, we introduce the gradient growth assumption and under this assumption we prove local linear convergence. We also discuss implications of our analysis to the incremental quasi-Newton methods and the EKF-S algorithm. In Section \ref{sec-lin-conv-constant-step}, we start with deriving upper bounds on the norm of the gradient error in our method for an arbitrary stepsize and then show global linear convergence with a constant stepsize under the gradient growth assumption. In Section \ref{sec-examples-sublinear-conv}, we give examples illustrating the possible sublinear convergence of the IN method in case this assumption does not hold. We conclude by Section \ref{sec-discussion-and-future} with a summary of our results.
\section{The IN Method}
\label{sec:incr-gauss-newton}
Newton's method is an important classical method for solving smooth unconstained optimization problems of the form   
  \beqas\mbox{minimize  }&& f(x)  \\
	\mbox{subject to}&& x \in \R^n. \nonumber 
   \eeqas
The standard Newton iteration is
  \beq\label{iter-standard-newton} x^{k+1} = x^k - \big(\nabla^2 f(x^k)\big)^{-1} \nabla f(x^k),
  \eeq
where $\nabla f(x)$ and $\nabla^2 f(x)$ denote the gradient and Hessian of $f$ at $x \in \R^n$ respectively. In this paper, we focus on problem \eqref{pbm-multi-agent} where  
  $$ f(x) = \sum_{i=1}^m f_i(x).$$
Computing $\nabla f(x^k)$ and $\nabla^2 f(x^k)$ in \eqref{iter-standard-newton}
necessitates summing gradients and Hessian matrices of each of the component functions $f_i$ which may be costly when the number of the functions $m$ is large. For such problems, it may be more effective to use an \textit{incremental method}, which cycles through each of the component functions $f_i$ and update $x^k$ based on the gradient $\nabla f_i$ and the Hessian $\nabla^2 f_i$ (see e.g. \cite{Bertsekas99nonlinear}).  
Our aim is to provide an incremental version of the Newton method. 
A natural idea would be to approximate $\nabla f$ with $\nabla f_i$, where $i \in \{1,2,\dots,m\}$ varies in a cyclic manner and to construct $\nabla^2 f$ by incrementally summing $\nabla^2 f_i$'s. These observations motivate the following algorithm, which we call the \textit{incremental Newton} (IN) algorithm. Given an initial point $x_1^1 \in \R^n$ and $m>1$, we consider the iterations   
      \beqa\label{eq-inner-update} \mbox{(IN)} \quad \quad x_{i+1}^k &:=& x_i^k - \alpha^k \big( H_i^k \big)^{-1}  \nabla f_i(x_i^k), \quad i = 1, 2, \dots,m, \\
	x_1^{k+1} &:=& x_{m+1}^k, \label{eq-inner-update-convention}
	\eeqa
where $H_i^k$ is a symmetric matrix updated by 
\beqa \label{eq-hessian-update} H_i^k &:=&  H_{i-1}^k + \nabla^2 f_i(x_i^k), \quad i = 1, 2, \dots,m,\\
     H_0^{k+1} &:=& H_{m}^k, \quad H_0^1 := 0, \label{eq-hessian-update-convention}
\eeqa
and $\alpha^k>0$ is the stepsize. The matrices $H_i^k$ accumulate and capture the second-order information at the iterates. For a fixed value of $k\geq 1$ and $i \in \{1,2,\dots,m\}$, we refer to the update \eqref{eq-inner-update} as an \textit{inner iteration}.  Consecutive $m$ iterations starting with $i=1$  will be denoted as a \textit{cycle} of our algorithm.

Algorithm IN is reminiscent of the EKF algorithm (when $\alpha^k=1$)  \cite{Bertsekas1996incremental} or the EKF algorithm with variable stepsize (EKF-S algorithm) \cite{AlgEkfs2003}, but there are major differences: EKF and EKF-S are Gauss-Newton based methods designed specifically for the least square problems using only first-order derivatives whereas Algorithm IN applies not only to least square problems, but also to problem \eqref{pbm-multi-agent} and is a Newton-based method that uses second-order derivatives in addition to first-order derivatives. When $\alpha^k=1$, it can be shown that the IN iterations satisfy
	\begin{eqnarray} \label{iter-incr-gauss-3} 
x_{i+1}^k &=& \arg \min_{x \in \R^n} \sum_{j=1}^i \hat{f}_j(x, x_j^k), \quad i=1,2,\dots,m,
	\end{eqnarray}
where $\hat{f}_j(x,x_j^k)$ is the standard quadratic approximation to $f_j$ around the point $x_j^k$ formed by the Taylor's series expansion given by 
\beq\label{eq-quadratic-approx-to-f}
 \hat{f}_j(x, x_j^k) = f_j(x_j^k) + \nabla f_j(x_j^k)^T (x - x_j^k) 
   + \frac{1}{2}(x - x_j^k) ^T \nabla^2 f_j(x_j^k) (x - x_j^k). 
\eeq
Thus, when each function $f_j$ is a quadratic, we have $\hat{f}_j = f_j$ for $j=1,2,\dots,m$ and it suffices to have only one cycle ($m$ inner iterations) of the IN method to reach out to the globally optimal solution. This is clearly much faster than first-order methods or the Newton-like methods such as the DANE method \cite{SrebroDane2013} which has only linear convergence for quadratic $f_i$'s. However, the trade-off for this accuracy in our method is increased memory requirement $O(n\times n)$ and the additional computation of the second-order derivatives.
   
We start with a lemma that provides a characterization for the evolution of inner iterations.  
\begin{lemma}\label{lemm:conseq-iters}Let $\{x_1^k, x_2^k,\dots, x_m^k\}$ be the iterates formed by the IN algorithm given by \eqref{eq-inner-update}--\eqref{eq-hessian-update-convention}. Then, for $i=1,2,\dots,m$, we have 
\beq \label{eq:conseq-outer-loop}
  x_{i+1}^k = x_1^k - \alpha^k (H_i^k)^{-1} \sum_{j=1}^i \bigg(\nabla f_j (x_j^k) + \frac{1}{\alpha^k} \nabla^2 f_j(x_j^k)(x_1^k - x_j^k) \bigg).
\eeq 
\end{lemma}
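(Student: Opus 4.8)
The plan is to prove \eqref{eq:conseq-outer-loop} by induction on $i$. First note that $(H_i^k)^{-1}$ is well-defined for every $i \geq 1$: since each $f_i$ is strongly convex, each $\nabla^2 f_i(x_i^k)$ is positive definite, and by the update \eqref{eq-hessian-update} the matrix $H_i^k$ is a sum that includes at least the term $\nabla^2 f_i(x_i^k)$, hence is positive definite and invertible. The one structural difficulty is that the inductive hypothesis at index $i-1$ naturally involves $(H_{i-1}^k)^{-1}$, whereas the target at index $i$ involves the \emph{different} accumulation matrix $(H_i^k)^{-1}$; inverting a sum of matrices directly would be awkward. To avoid this, I would induct on the equivalent ``cleared'' identity
\[
  H_i^k\,(x_{i+1}^k - x_1^k) = -\alpha^k\, S_i, \qquad
  S_i := \sum_{j=1}^i \Big( \nabla f_j(x_j^k) + \tfrac{1}{\alpha^k}\,\nabla^2 f_j(x_j^k)\,(x_1^k - x_j^k) \Big),
\]
and only at the very end left-multiply by $(H_i^k)^{-1}$ to recover \eqref{eq:conseq-outer-loop}. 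This formulation never inverts anything inside the induction and, as a bonus, tolerates the degenerate convention $H_0^1 = 0$.

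For the base case I would take $i = 0$: the cleared identity reads $H_0^k(x_1^k - x_1^k) = 0 = -\alpha^k S_0$ (empty sum), which holds trivially and, crucially, does not require $H_0^k$ to be invertible. For the inductive step, assume the cleared identity at index $i-1$, namely $H_{i-1}^k(x_i^k - x_1^k) = -\alpha^k S_{i-1}$. Starting from the inner iteration \eqref{eq-inner-update}, I would left-multiply $x_{i+1}^k = x_i^k - \alpha^k (H_i^k)^{-1}\nabla f_i(x_i^k)$ by $H_i^k$ to obtain $H_i^k(x_{i+1}^k - x_1^k) = H_i^k(x_i^k - x_1^k) - \alpha^k \nabla f_i(x_i^k)$, then substitute the Hessian update \eqref{eq-hessian-update} in the form $H_i^k = H_{i-1}^k + \nabla^2 f_i(x_i^k)$ into the first term on the right. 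The piece $H_{i-1}^k(x_i^k - x_1^k)$ is replaced by $-\alpha^k S_{i-1}$ using the hypothesis, and the leftover piece $\nabla^2 f_i(x_i^k)(x_i^k - x_1^k)$ equals $-\nabla^2 f_i(x_i^k)(x_1^k - x_i^k)$, which is exactly $-\alpha^k$ times the $j=i$ summand needed to promote $S_{i-1}$ to $S_i$. Collecting terms gives $H_i^k(x_{i+1}^k - x_1^k) = -\alpha^k S_i$, closing the induction.

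I expect the only point deserving care to be the bookkeeping that bridges the two different accumulation matrices $H_{i-1}^k$ and $H_i^k$; clearing denominators before inducting reduces this to a single application of \eqref{eq-hessian-update} rather than any manipulation of matrix inverses, and the sign identity $\nabla^2 f_i(x_i^k)(x_i^k - x_1^k) = -\nabla^2 f_i(x_i^k)(x_1^k - x_i^k)$ is precisely what produces the characteristic $(x_1^k - x_j^k)$ correction term in \eqref{eq:conseq-outer-loop}. Everything else is routine algebra, and the final left-multiplication by the invertible matrix $(H_i^k)^{-1}$ yields the stated formula for all $i = 1,2,\dots,m$.
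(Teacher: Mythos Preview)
Your proof is correct and follows a genuinely different route from the paper's. The paper does not argue by direct induction: instead it factors each Hessian as $\nabla^2 f_i(x_i^k) = (C_i^k)^T C_i^k$ via a Cholesky decomposition, rewrites the inner update \eqref{eq-inner-update} in the form of a Kalman filter update
\[
  x_{i+1}^k = x_i^k + (H_i^k)^{-1}(C_i^k)^T\,(z_i^k - C_i^k x_i^k),
\]
and then invokes an existing Kalman filter identity (\cite[Proposition~1.5.2]{Bertsekas99nonlinear}) that gives $x_{i+1}^k$ directly in terms of $x_1^k$ and the accumulated data, which after unpacking yields \eqref{eq:conseq-outer-loop}. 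Your approach, by contrast, is entirely self-contained: clearing denominators and inducting on the identity $H_i^k(x_{i+1}^k - x_1^k) = -\alpha^k S_i$ avoids both the Cholesky step and the appeal to an external proposition, and the only nontrivial move is the single substitution $H_i^k = H_{i-1}^k + \nabla^2 f_i(x_i^k)$ from \eqref{eq-hessian-update}. The paper's detour buys a conceptual link to incremental least squares and the EKF literature (which is thematically relevant elsewhere in the paper), while your argument buys elementarity and brevity; for the purpose of establishing the lemma itself, your route is arguably cleaner.
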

\begin{proof}
Let $x_1^k$ be given. The iterations \eqref{eq-inner-update} can be rewritten as
$$x_{i+1}^k = x_i^k - \big(H_i^k\big)^{-1} \big( \alpha^k  \nabla  f_i (x_i^k)\big), \quad i=1,2,\dots,m, $$
which is equivalent to
\beq\label{eq-kalman-filter-update} x_{i+1}^k =  x_i^k + \big(H_i^k\big)^{-1} \big(C_i^k\big)^T ( z_i^k - C_i^k x_i^k), \quad i=1,2,\cdots,m,
\eeq
where $C_i^k$ is a positive definite matrix satisfying
$$ \quad (C_i^k)^T C_i^k = \nabla^2 f_i(x_i^k), \quad H_{i}^k = H_{i-1}^k + (C_i^k)^T C_i^k, $$  
$$z_i^k = - \big(C_i^k\big)^{-T} \big( \alpha^k \nabla f_i(x_i^k) -  \nabla^2 f_i(x_i^k) x_i^k \big).$$
(Such a matrix $C_i^k$ exists and can for instance be obtained by a Cholesky decomposition of the positive definite matrix $\nabla^2 f_i(x_i^k)$). Then, the update formula \eqref{eq-kalman-filter-update} is equivalent to a Kalman filter update that solves the incremental quadratic optimization problem 
     \beqas x_{i+1}^k &=& \arg \min_{x \in \R^n} \sum_{j=1}^i \| z_j^k - C_j^k x \|^2, \quad \mbox{for} \quad i=1,2,\cdots,m,  
     \eeqas 
(see \cite[Proposition 1.5.2]{Bertsekas99nonlinear}). Using existing results on Kalman filters, (in particular \cite[Proposition 1.5.2]{Bertsekas99nonlinear}), we obtain
  \beqas x_{i+1}^k &=& x_1^k + \big(H_i^k\big)^{-1} \sum_{j=1}^i (C_j^k)^T (z_j^k - C_j^k x_1^k) \\
      &=& x_1^k - \big(H_i^k\big)^{-1} \bigg( \sum_{j=1}^i  \alpha^k \nabla f_j(x_j^k) + \nabla^2 f_j(x_j^k) (x_1^k - x_j^k) \bigg)
  \eeqas
which is equivalent to \eqref{eq:conseq-outer-loop}. This completes the proof. \qed
\end{proof}

Using Lemma \ref{lemm:conseq-iters}, for $i=1,2,\dots,m$, we can write
\beq\label{eq-iter-ver-compact} x_{i+1}^{k} = x_1^k - \alpha^k D_i^k h_i^k 
\eeq
where 
\beq\label{eq-iter-ver-compact2}
D_i^k = (H_i^k)^{-1}, \quad h_i^k = \sum_{j=1}^i \bigg( \gradfjxk + \frac{1}{\alpha^k} \hessfjxk (x_1^k - x_j^k) \bigg).
\eeq

We make the following two assumptions which have been used in a number of papers for analyzing incremental and stochastic methods (see e.g. \cite{Bertsekas1996incremental}, \cite{AlgEkfs2003},~\cite[Theorem 3.1]{ManSol98IncGradMomentum},~\cite[Assumption 1]{StocBFGSRibeiro14},~\cite{SrebroDane2013}, \cite{BachSagaMethod14}).

\begin{assumption}\label{assum-compact} \textbf{(Boundedness)} The sequence $\{x_1^k, x_2^k, \dots, x_m^k \}_{k=1,2,\dots}$ generated by the IN iterations \eqref{eq-inner-update}-\eqref{eq-hessian-update-convention} is contained in a compact set $\mathcal{X} \in \R^n$ whose diameter is
   \beq\label{def-R-diameter} R:=\max_{x,y \in \mathcal{X}} \|x -y\|.
   \eeq 
\end{assumption}
\begin{assumption}\label{assum-strong-cvx} \textbf{(Hessian boundedness)} The functions $f_i$, $i=1,2,\dots,m$ are twice continuously differentiable, and there exists constants $c>0$ and $C>0$ such that\footnote{Note that the existence of the upper bound $C$ on the Hessian is an immediate implication of Assumption \ref{assum-compact} as $f$ is twice continuously differentiable. We include it here to highlight the Hessian bounds in the same place.} 
	\beq\label{eq-hessian-bounds}  c I \preceq 		\nabla^2 f_i (x)  \preceq C I, 
 	\eeq 
for all $x \in \R^n$ and $i=1,2,\dots,m$.
\end{assumption}
A consequence of Assumption \ref{assum-strong-cvx} is that the function $f$ is strongly convex with parameter $cm>0$ as each $f_i$ is strongly convex with parameter $c>0$. Thus, the optimization problem \eqref{pbm-multi-agent} admits a unique optimal solution. Another consequence is that the gradients have a Lipschitz constant $\hessub$, i.e., 
  \beq\label{eq:gradient-Lipschitzness}\|f_i(x) - f_i(y)\| \leq \hessub \|x-y\|, \quad i=1,2,\dots,m,
  \eeq
for all $x,y \in \R^n$, where we use $\|\cdot\|$ to denote the $2$-norm (Euclidean norm) of a vector or the 2-norm (spectral norm) of a matrix depending on the context throughout this paper. We note that, by \eqref{eq-hessian-bounds}, the ratio
   \beq\label{def-cond-number} Q:= \frac{C}{c}
   \eeq
is an upper bound for the condition number of the Hessian matrices at the iterates. Therefore, we will refer to it as the \textit{condition number} of problem \eqref{pbm-multi-agent}.
 
We now investigate the evolution of the Hessian matrices $H_i^k$ with $k$. It is straightforward to see from the Hessian update formula \eqref{eq-hessian-update} that
\beq\label{eq-hessian-iterates-decompositon} H_i^k = \sum_{i=1}^{k-1} \sum_{j=1}^m \nabla^2 f_j (x_j^i) + \sum_{j=1}^i \hessfjxk. 
\eeq
The next lemma shows that the matrices $\{H_i^k\}_{i,k\geq1}$ have a norm (2-norm) growing linearly with $k$. 

\begin{lemma}\label{eq-iter-hess-bound} Suppose that Assumptions \ref{assum-compact} and \ref{assum-strong-cvx} hold. Then, for any $i = 1,2, \dots, m$ and $k\geq 1$, we have 
\beq\label{eq-hessian-growth-inner-steps} ck I \preceq c\big( (k-1)m + i\big) I \preceq   H_i^k \preceq Cmk I,  
\eeq
\beq\label{eq-hessian-inv-growth-inner-steps} \frac{1}{Cmk} I \preceq   D_i^k  \preceq \frac{1}{c\big( (k-1)m + i\big)} I \preceq \frac{1}{ck} I.
\eeq
It follows that, for any $i=1,2,\dots,m$ and $k\geq 2$,
  \beq\label{ineq-hessian-growth-k-geq-2}\frac{ckm}{2}I \preceq H_i^k, \quad D_i^k \preceq \frac{2}{ckm}I.
  \eeq  
\end{lemma}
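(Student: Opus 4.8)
The plan is to prove the three displayed inequalities in sequence, using the explicit decomposition \eqref{eq-hessian-iterates-decompositon} of $H_i^k$ together with the pointwise Hessian bounds \eqref{eq-hessian-bounds} from Assumption \ref{assum-strong-cvx}. First I would count the number of Hessian terms summed into $H_i^k$. From \eqref{eq-hessian-iterates-decompositon}, the matrix $H_i^k$ is a sum of $(k-1)m$ terms coming from the completed cycles $1,\dots,k-1$ (each contributing $m$ Hessians) plus $i$ terms from the current partial cycle $k$, for a total of $(k-1)m + i$ summands. Each summand $\nabla^2 f_j(x_j^i)$ satisfies $cI \preceq \nabla^2 f_j(x_j^i) \preceq CI$ by \eqref{eq-hessian-bounds}. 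Summing these matrix inequalities (and using that the Loewner order is preserved under addition) immediately yields
\beq
  c\big((k-1)m + i\big) I \preceq H_i^k \preceq C\big((k-1)m + i\big) I. \nonumber
\eeq

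To obtain \eqref{eq-hessian-growth-inner-steps} from this, I would bound the count $(k-1)m+i$ above and below by quantities that are cleaner to work with. Since $i \geq 1$, we have $(k-1)m + i \geq (k-1)m + 1 \geq k$ (using $m \geq 1$, or more simply $(k-1)m \geq k-1$ so that $(k-1)m + i \geq k-1+1 = k$); this gives the leftmost inequality $ckI \preceq c\big((k-1)m+i\big)I$. For the upper bound, since $i \leq m$ we have $(k-1)m + i \leq (k-1)m + m = km$, so $C\big((k-1)m+i\big)I \preceq CmkI$, which is the rightmost inequality. The inequality \eqref{eq-hessian-inv-growth-inner-steps} for $D_i^k = (H_i^k)^{-1}$ then follows by inverting \eqref{eq-hessian-growth-inner-steps}: for symmetric positive definite matrices, $A \preceq B$ with both invertible implies $B^{-1} \preceq A^{-1}$, so the chain of inequalities reverses and each scalar multiple of $I$ gets inverted.

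For \eqref{ineq-hessian-growth-k-geq-2}, the point is that when $k \geq 2$ one can improve the crude lower bound $ck$ on the eigenvalue count to something proportional to $km$. Here I would use $(k-1)m + i \geq (k-1)m \geq \frac{k}{2}m$, where the last step uses $k-1 \geq k/2$, valid precisely because $k \geq 2$. Combining with the lower Hessian bound gives $\frac{ckm}{2} I \preceq H_i^k$, and inverting yields $D_i^k \preceq \frac{2}{ckm} I$. None of these steps presents a genuine obstacle; the only mild care needed is the bookkeeping of the summand count in \eqref{eq-hessian-iterates-decompositon} (note the inner index $i$ in that display is a summation dummy distinct from the subscript $i$ on $H_i^k$) and the direction reversal when passing to inverses. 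The argument is elementary and purely a matter of assembling the pointwise bounds into the Loewner order.
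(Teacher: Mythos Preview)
Your proposal is correct and follows essentially the same approach as the paper: counting the $(k-1)m+i$ summands in \eqref{eq-hessian-iterates-decompositon}, applying the pointwise bounds \eqref{eq-hessian-bounds}, and then using the elementary inequalities $k \leq (k-1)m+i \leq km$ and (for $k\geq 2$) $k-1 \geq k/2$. The paper's proof is terser but identical in substance, phrasing the last step as $k/(k-1)\leq 2$ rather than $k-1\geq k/2$.
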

\begin{proof}
The first inequality on the left of \eqref{eq-hessian-growth-inner-steps} is a direct consequence of the inequality $k \leq \big( (k-1)m + i\big)$ for $k\geq 1$ and $i \in \{1,2,\dots,m\}$. Other inequalities in \eqref{eq-hessian-growth-inner-steps} and \eqref{eq-hessian-inv-growth-inner-steps} follow directly from applying the Hessian bounds \eqref{eq-hessian-bounds} to the representation of $H_i^k$ given by the formula \eqref{eq-hessian-iterates-decompositon} and the fact that $D_i^k = (H_i^k)^{-1}$. Inequalities \eqref{ineq-hessian-growth-k-geq-2} follow from \eqref{eq-hessian-growth-inner-steps} and \eqref{eq-hessian-inv-growth-inner-steps} using the fact that $k/(k-1)\leq 2$ for $k\geq 2$.
\qed
\end{proof}
\section{Convergence with variable stepsize}
\label{sec-conv-variable-stepsize}
In this section, we introduce a variable stepsize rule and  study global convergence of the IN method under this stepsize. We use the variable stepsize that was proposed in Moriyama \textit{et al.} \cite{AlgEkfs2003} for showing convergence of the EKF-S method. 
\begin{assumption}\label{assum-stepsize} \textbf{(Variable stepsize)} The stepsize used in the Algorithm IN defined by \eqref{eq-inner-update}--\eqref{eq-hessian-update-convention} satisfies 
$$ 1 \leq \alpha^k \leq \max(1,\alpha_*^k), \quad k=1,2,\dots, $$
where
\large
\beq\label{def-alpha-star-k}  \alpha_*^k = 
\begin{cases} 
      \frac{1-\eta}{\hessub} \frac{ (x_1^{k+1} - x_1^k)^T H_m^k (x_1^{k+1} - x_1^k)}{\|x_1^{k+1} - x_1^k\| \sum_{i=2}^m \|x_i^{k} - x_1^k\| + \frac{m}{2} \|x_1^{k+1} - x_1^k\|^2},   \hfill & \text{ if $x_1^k \neq x_1^{k+1}$, } \\
      0, \hfill & \text{ otherwise},
  \end{cases}
\eeq
\normalsize  
for some  $ \eta \in (0,1)$, which we refer to as the \emph{stepsize control parameter}.
\end{assumption}
The form of this stepsize can be motivated as follows: The representation formulae for the inner iterates \eqref{eq-iter-ver-compact}--\eqref{eq-iter-ver-compact2} show that if the norm of $\nabla f$ is very small compared to the norm of $\nabla f_i$ for some $i$, unless the stepsize $\alpha^k$ is small, we could be in a situation where the total distance traveled during one cycle of the iteration $\|x_1^{k+1} - x_1^{k}\|$ is very small compared to that of the inner iterations $\|x_i^k - x_1^{k}\|$, resulting in large oscillations. 
This suggests to have a variable stepsize that takes smaller steps if the ratio $\|x_1^{k+1} - x_1^{k}\|/\sum_{i}\|x_i^k - x_1^{k}\|$ gets smaller as in Assumption \ref{assum-stepsize}. Such a variable stepsize would kill undesired oscillations, enabling moving towards the optimal solution in a more efficient way. This stepsize can also be motivated by a descent condition, which leads to the monotonic decrease of the function values $\{f(x_1^k)\}_k$ asymptotically (when $k$ is large enough)  as long as the iterates stay bounded (see \cite[Lemma 4.1]{AlgEkfs2003}).  Furthermore, it is easy to implement by a simple adaptive stepsize algorithm (see Remark \ref{rema-adaptive-stepsize}). 

By Assumptions \ref{assum-compact} and \ref{assum-strong-cvx} on the boundedness of the iterates, gradients and Hessian matrices, we see that $h_m^k$ defined by \eqref{eq-iter-ver-compact2} is bounded. Hence, by Lemma \ref{eq-iter-hess-bound} that provides bounds for the matrices $D_m^k$ and \eqref{eq-iter-ver-compact} on the evolution of inner iterates,
it follows that the distance between the consecutive iterates at the beginning of each cycle satisfies 
\beqas
	\|x_{1}^{k+1} - x_1^k\| = \alpha^k \|D_m^k h_m^k\| = O(\alpha^k/k).
\eeqas 
Thus, the \textit{normalized stepsize}
  \beq\label{eq-normalized-step}\gamma^k=\alpha^k/k
  \eeq
can be thought as the effective stepsize whose behavior determines the convergence rate. If $\alpha^k$ is bounded, then $\gamma^k \to 0$ in which case we would expect sublinear convergence in general as Example \ref{example-sublinear-conv} in Section \ref{sec-examples-sublinear-conv} shows. For faster convergence, we would need $\alpha^k$ and (hence $\alpha_*^k$ by Assumption \ref{assum-stepsize}) to grow with $k$. This motivates us to define
  \beq\label{def-limiting-stepsizes} \gamma_*^k=\alpha_*^k/k, \quad \underline{\gamma} = \liminf_{k\to \infty} \gamma_*^k \quad \mbox{and} \quad \overline{\gamma} = \limsup_{k\to \infty} \gamma_*^k, 
  \eeq
requiring a lower bound on the growth rate $\underline{\gamma}$. For linear convergence, we would also typically need an upper bound on the stepsize, because even the simplest methods (such as the steepest descent method) with constant stepsize require the stepsize to be small enough in order to be able to guarantee linear convergence \cite[Section 1.4.2]{Pol87}. This motivates the next result which provides an upper bound for $\overline{\gamma}$.
%
\begin{lemma}\label{lemm-alphastar-bound}
Suppose that Assumptions \ref{assum-compact}, \ref{assum-strong-cvx} and \ref{assum-stepsize} hold. Then, we have 
	\beq\label{ineq-gamma-star-k} \overline{\gamma} \leq \phi \quad \mbox{and} \quad \gamma_*^k \leq \phi \quad \mbox{for all} \quad k=1,2,\dots,
	\eeq
with \beq\label{def-phi}\phi = 2(1-\eta) Q > 0\eeq where $\eta$ is the stepsize control parameter as in \eqref{def-alpha-star-k} and $Q$ is the condition number of the problem \eqref{pbm-multi-agent} defined by \eqref{def-cond-number}.	
\end{lemma}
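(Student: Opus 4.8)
The plan is to estimate the single quantity $\gamma_*^k = \alpha_*^k/k$ directly from the explicit formula \eqref{def-alpha-star-k} and then pass to the $\limsup$ to obtain the statement about $\overline{\gamma}$. Writing $d := x_1^{k+1} - x_1^k$, I would first dispose of the degenerate case: if $d = 0$ then $\alpha_*^k = 0$ by definition, so $\gamma_*^k = 0 \le \phi$ and there is nothing to prove. Thus I may assume $d \neq 0$, so that the denominator appearing in \eqref{def-alpha-star-k} is strictly positive and the fraction is well defined.

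The two ingredients are a matching pair of monotone bounds on the numerator and on the denominator. For the numerator, $H_m^k$ is symmetric positive definite, so $d^T H_m^k d \ge 0$, and the spectral upper bound $H_m^k \preceq Cmk\,I$ — which is exactly the right-hand inequality of \eqref{eq-hessian-growth-inner-steps} in Lemma \ref{eq-iter-hess-bound} evaluated at $i=m$ — gives $d^T H_m^k d \le Cmk\,\|d\|^2$. For the denominator, the cross term $\|d\|\sum_{i=2}^m \|x_i^k - x_1^k\|$ is nonnegative, so the whole denominator is at least $\tfrac{m}{2}\|d\|^2$. Combining the two and cancelling the (nonzero) factor $\|d\|^2$ yields
\beq
  \alpha_*^k \;\le\; \frac{1-\eta}{C}\cdot\frac{Cmk\,\|d\|^2}{\tfrac{m}{2}\|d\|^2} \;=\; 2(1-\eta)\,k,
\eeq
so that $\gamma_*^k = \alpha_*^k/k \le 2(1-\eta)$ for every $k$. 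Since $Q = C/c \ge 1$ by \eqref{def-cond-number}, this is in turn bounded by $2(1-\eta)Q = \phi$, and taking $\limsup_{k\to\infty}$ in the definition \eqref{def-limiting-stepsizes} gives $\overline{\gamma} \le \phi$ as well.

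I do not expect a genuine obstacle here; the result follows from a one-line estimate once the two bounds are in place. The only points needing care are the degenerate case $d=0$, which is handled by the separate branch in \eqref{def-alpha-star-k}, and the direction of the Hessian bound — it is the \emph{upper} bound on $H_m^k$ that must be fed into the numerator, which is precisely what Lemma \ref{eq-iter-hess-bound} supplies. It is worth noting that discarding the nonnegative geometric cross term in the denominator is legitimate exactly because an upper bound is sought, and that the condition-number factor $Q$ in $\phi$ merely provides slack over the sharper constant $2(1-\eta)$ produced by this argument.
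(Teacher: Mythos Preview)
Your argument is correct and in fact a bit cleaner than the paper's own proof. The paper first rewrites $\alpha_*^k$ via the representation \eqref{eq-iter-ver-compact}, substituting $x_1^{k+1}-x_1^k=\alpha^k D_m^k h_m^k$ and $x_i^k-x_1^k=\alpha^k D_{i-1}^k h_{i-1}^k$, and then bounds the resulting ratio using \emph{both} sides of Lemma~\ref{eq-iter-hess-bound}: the upper bound $D_m^k\preceq \tfrac{1}{cmk}I$ in the numerator and the lower bound $D_i^k\succeq \tfrac{1}{Cmk}I$ in the denominator. This double use of the spectral bounds is what produces the condition-number factor $Q=C/c$ in $\phi$. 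You instead work directly with $d=x_1^{k+1}-x_1^k$, use only the single upper bound $H_m^k\preceq Cmk\,I$ in the numerator, and drop the nonnegative cross term in the denominator; this yields the sharper intermediate inequality $\gamma_*^k\le 2(1-\eta)$, which you then relax to $\phi$ using $Q\ge 1$. Both routes are valid; yours is shorter, uses fewer ingredients, and shows that the $Q$ in the definition of $\phi$ is slack for this particular lemma (though the paper's route keeps the $h_i^k$ quantities visible, which meshes with the later analysis in Theorem~\ref{thm-lower-bound-stepsize-growth}).
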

\begin{proof} 
If $h_m^k \neq 0$, then for $k\geq 2$, 
\begin{eqnarray*}
  \alpha_*^k &=& \frac{1-\eta}{\hessub} \frac{ (h_m^k)^T D_m^k h_m^k}{\|D_m^k h_m^k\| \sum_{i=2}^m \|D_{i-1}^k h_{i-1}^k\| + \frac{m}{2} \|D_m^k h_m^k\|^2}  \\
  &\leq& \frac{1-\eta}{\hessub} \frac{\frac{1}{cmk} \|h_m^k\|^2}{(1/Cmk)^2 (\| h_m^k\| \sum_{i=2}^m \| h_{i-1}^k\| + \frac{m}{2} \| h_m^k\|^2) }  \\
  &=& \phi  \frac{1}{ (2/m)\sum_{i=2}^m \| h_{i-1}^k\|/\| h_m^k\| + 1} k \\
  &\leq& \phi  k
\end{eqnarray*}
where the first equality follows from \eqref{eq-iter-ver-compact}, the first inequality is obtained by using the bound on $D_i^k$ for $i=1,2,\dots,m$ in Lemma \ref{eq-iter-hess-bound} and the second inequality follows since the term $(2/m)\sum_{i=2}^m \| h_{i-1}^k\|/\| h_m^k\|$ is non-negative. This implies \eqref{ineq-gamma-star-k}. Otherwise, if $h_m^k = 0$, then $\alpha_*^k=\gamma_*^k = 0$ satisfying \eqref{ineq-gamma-star-k} clearly. \qed
\end{proof}

The next theorem shows the global convergence of the iterates generated by the IN method to the unique optimal solution of problem \eqref{pbm-multi-agent}. The proof uses a similar line of argument as in the proof of 
\cite[Theorem 4.1]{AlgEkfs2003}; so we skip it here due to space considerations. 
\begin{theorem}\textbf{(Global convergence with variable stepsize)}\label{thm-global-conv}
Suppose that Assumptions \ref{assum-compact}, \ref{assum-strong-cvx} and \ref{assum-stepsize} hold. Then the iterates $\{x_1^k\}_{k=1}^\infty$ generated by the IN method \eqref{eq-inner-update}--\eqref{eq-hessian-update-convention} satisfy 
$$ \lim_{k \to \infty} \| \nabla f (x_1^k)\| = 0 $$
and converge to the unique optimal solution of the optimization problem \eqref{pbm-multi-agent}. 
\end{theorem}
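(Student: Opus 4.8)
The plan is to reduce the statement to the single claim $\nabla f(x_1^k)\to 0$. Since each $f_i$ is $c$-strongly convex, $f$ is $cm$-strongly convex (as noted after Assumption \ref{assum-strong-cvx}), so \eqref{pbm-multi-agent} has a unique minimizer $x^*$ and the bounds $\|\nabla f(x)\|^2\ge 2cm\big(f(x)-f(x^*)\big)$ and $\|\nabla f(x)\|\ge cm\|x-x^*\|$ hold; hence $\nabla f(x_1^k)\to 0$ already forces $f(x_1^k)\to f(x^*)$ and $x_1^k\to x^*$. The driving estimate will be a per-cycle descent inequality for $\{f(x_1^k)\}$, obtained by feeding the compact representation $x_1^{k+1}-x_1^k=-\alpha^k D_m^k h_m^k$ of Lemma \ref{lemm:conseq-iters} into the Lipschitz descent inequality for $f$, whose gradient is $Cm$-Lipschitz by \eqref{eq-hessian-bounds}.

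First I would isolate the gradient error. Writing $h_m^k=\nabla f(x_1^k)+e^k$ with $e^k=\sum_{j=1}^m\big(\tfrac{1}{\alpha^k}\hessfjxk-\int_0^1\nabla^2 f_j(x_j^k+t(x_1^k-x_j^k))\,dt\big)(x_1^k-x_j^k)$, the Hessian bounds \eqref{eq-hessian-bounds} and $\alpha^k\ge 1$ give $\|e^k\|\le C\sum_{i=2}^m\|x_i^k-x_1^k\|$. Substituting $\nabla f(x_1^k)=h_m^k-e^k$ and $x_1^{k+1}-x_1^k=-\alpha^k D_m^k h_m^k$ into the descent inequality yields
\[
f(x_1^{k+1})\le f(x_1^k)-\frac{N^k}{\alpha^k}+C\,Den^k,\qquad Den^k:=\|x_1^{k+1}-x_1^k\|\sum_{i=2}^m\|x_i^k-x_1^k\|+\tfrac m2\|x_1^{k+1}-x_1^k\|^2,
\]
where $N^k:=(x_1^{k+1}-x_1^k)^TH_m^k(x_1^{k+1}-x_1^k)$. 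The point of the stepsize \eqref{def-alpha-star-k} is that its denominator is exactly $Den^k$: when $\alpha^k\le\alpha_*^k$ (in particular whenever $\alpha_*^k\ge1$) one has $C\,Den^k\le(1-\eta)N^k/\alpha^k$, so the inequality collapses to the clean descent
\[
f(x_1^{k+1})\le f(x_1^k)-\eta\,\frac{N^k}{\alpha^k}.
\]
In the remaining case $\alpha_*^k<1$ one has $\alpha^k=1$, and Lemma \ref{eq-iter-hess-bound} makes $\|x_1^{k+1}-x_1^k\|$ and every $\|x_i^k-x_1^k\|$ of order $1/k$, so $Den^k=O(1/k^2)$ is summable. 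Thus $\{f(x_1^k)\}$ is monotone decreasing up to a summable perturbation; being bounded below on $\mathcal X$ it converges, and $\sum_k N^k/\alpha^k<\infty$ along the descent indices.

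Next I would read off two consequences using $H_m^k\succeq ckm I$, $D_m^k\succeq (1/Cmk)I$ and $\alpha^k\le\max(1,\phi k)$ (Lemma \ref{lemm-alphastar-bound}): first $N^k/\alpha^k\ge (cm/\phi)\|x_1^{k+1}-x_1^k\|^2$ for large $k$, so $\|x_1^{k+1}-x_1^k\|\to0$; and second $N^k/\alpha^k\ge\|h_m^k\|^2/(Cmk)$, so $\sum_k\|h_m^k\|^2/k<\infty$ over the descent indices. \textbf{The main obstacle is to convert this into} $\nabla f(x_1^k)\to0$, because $e^k$ need not vanish merely because the step is small: the normalized stepsize $\gamma^k=\alpha^k/k$ may stay bounded away from $0$, and then $h_m^k$ and $\nabla f(x_1^k)$ could differ by a non-negligible amount. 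I would overcome this by combining two bounds on $e^k$, with constants $K_1,K_2$ coming from Assumptions \ref{assum-compact}--\ref{assum-strong-cvx}. The inner-step estimates of Lemma \ref{eq-iter-hess-bound} give the always-valid $\|e^k\|\le K_1\gamma^k$, while in the descent case the stepsize identity $\|x_1^{k+1}-x_1^k\|\sum_{i=2}^m\|x_i^k-x_1^k\|\le (1-\eta)N^k/(C\alpha^k)$ together with $N^k\le Cmk\|x_1^{k+1}-x_1^k\|^2$ gives $\|e^k\|\le K_2\|x_1^{k+1}-x_1^k\|/\gamma^k$. Their geometric mean,
\[
\|e^k\|^2\le K_1K_2\,\|x_1^{k+1}-x_1^k\|\longrightarrow 0,
\]
shows $e^k\to0$ in the descent regime, while in the case $\alpha^k=1$ the bound $\|e^k\|\le K_1/k$ does the same; hence $e^k\to0$ in all regimes.

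Finally I would close the argument. With $e^k\to0$, suppose for contradiction that $\|\nabla f(x_1^k)\|\ge\mu>0$ for all large $k$; then $\|h_m^k\|\ge\mu/2$ eventually. The case $\alpha_*^k<1$ cannot then persist, since there the relation $\alpha_*^k<1$ reads $N^k<\tfrac{C}{1-\eta}Den^k=O(\|h_m^k\|/k^2)$, which combined with $N^k\ge\|h_m^k\|^2/(Cmk)$ forces $\|h_m^k\|=O(1/k)\to0$; so the descent inequality holds for all large $k$ and $\sum_k N^k/\alpha^k\ge\sum_k \mu^2/(4Cmk)=\infty$, contradicting the summability above. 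Hence $\liminf_k\|\nabla f(x_1^k)\|=0$. Passing to a subsequence along which $\nabla f(x_1^k)\to0$ gives $f(x_1^k)\to f(x^*)$, so the convergent sequence $\{f(x_1^k)\}$ has limit $f(x^*)$; strong convexity then upgrades this to $x_1^k\to x^*$ and $\nabla f(x_1^k)\to0$, which is the assertion. I expect the delicate points to be the two-sided control of $e^k$ and the verification that the stepsize denominator dominates the Taylor remainder with the sharp constant $(1-\eta)$.
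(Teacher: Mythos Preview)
Your proposal is correct, and there is nothing to compare it to in the paper itself: the paper explicitly omits this proof, saying only that it ``uses a similar line of argument as in the proof of \cite[Theorem 4.1]{AlgEkfs2003}.'' Your argument is exactly that line: the Lipschitz descent inequality for $f$, the identity $(h_m^k)^T(x_1^{k+1}-x_1^k)=-N^k/\alpha^k$, the recognition that the denominator of $\alpha_*^k$ in \eqref{def-alpha-star-k} is your $Den^k$, and the resulting per-cycle decrease $f(x_1^{k+1})\le f(x_1^k)-\eta N^k/\alpha^k$ whenever $\alpha^k\le\alpha_*^k$---this is precisely the mechanism of Moriyama \textit{et al.}'s Lemma~4.1 and Theorem~4.1. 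Two minor remarks: the telescoping actually gives $\sum_k N^k/\alpha^k<\infty$ over \emph{all} indices (not just the descent ones), since the non-descent increments are bounded by the summable $C\,Den^k$ and the $-N^k/\alpha^k$ term is present in both regimes; and your geometric-mean trick $\|e^k\|^2\le K_1K_2\|x_1^{k+1}-x_1^k\|$ for handling $e^k\to 0$ is a clean way to close the gap that the cited reference handles by a slightly different bookkeeping. The constants and the final $\liminf\Rightarrow\lim$ upgrade via convergence of $\{f(x_1^k)\}$ are all in order.
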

\subsection{Linear Convergence} 
 \label{sec-linear-conv}
We use the following assumption which was also adopted in \cite{TsengIncrGradient98,Solodov98IncrGrad,AlgEkfs2003,schmidt2013fast} for analyzing stochastic and incremental gradient methods. 
\begin{assumption}\label{assum-gradfi} \textbf{(Gradient growth condition)} There exists a positive constant $M$ such that $$\|\gradfi(x)\| \leq M \|\nabla f(x)\|$$ for all $i=1,2,\dots,m$. 
\end{assumption}
Assumption \ref{assum-gradfi} states that the norm of $ \nabla f_1,\nabla f_2,\dots, \nabla f_m $ is bounded by a linear function of the norm of $\nabla f$. Thus, it limits the oscillations that might arise due to an imbalance between the norm of $\nabla f_i$ (for some $i$) and the norm of $\nabla f$ which led us previously to adopt a variable stepsize that gets smaller when such oscillations arise (see the paragraph after Assumption \ref{assum-stepsize}). Indeed, we show in Theorem \ref{thm-lower-bound-stepsize-growth} that this assumption, by limiting such oscillations, can avoid the variable stepsize rule of Assumption \ref{assum-stepsize} getting too small (keeping the normalized stepsize bounded away from zero).

Note that Assumption \ref{assum-gradfi} requires $\nabla f_1(x) = \nabla f_2(x) = \cdots = \nabla f_m(x) = 0$ at a stationary point $x$ of $f$. This requirement, although restrictive, is not completely unrealistic for certain applications such as neural network training problems or non-linear least square problems when the residual error is zero~\cite{TsengIncrGradient98,schmidt2013fast}. Under this assumption,
 
\begin{itemize}
  \item Tseng \cite{TsengIncrGradient98} shows that his incremental gradient method is either linearly convergent or the stepsize is bounded away from zero in which case convergence rate is not known. It is not clear whether his method can achieve linear convergence under stronger conditions. This method is not applicable to our setting as it requires constructing and evaluating the full gradient, $\nabla f$.
  \item Solodov \cite{Solodov98IncrGrad} shows global convergence of the IG method with a constant stepsize, although no convergence rate results are given.
  \item Moriyama \textit{et al.} \cite{AlgEkfs2003} show that EKF-S method is globally convergent with a stepsize $\alpha^k$ that grows linearly with $k$ but do not provide an explicit lower bound on the growth rate or any convergence rate results.   
  \item Schmidt \cite{schmidt2013fast} proves that the SGD method is linearly convergent in expectation in a stochastic setting but the analysis and results are not deterministic and do not apply directly to our method.  
\end{itemize} 

In this paper, we show the linear convergence of our method under Assumption \ref{assum-gradfi} when the stepsize control parameter $\eta$ in Assumption \ref{assum-stepsize} is appropriately chosen. As a by-product, our analysis also implies the linear convergence of the EKF-S algorithm (see Corollary \ref{coro-linear-conv-EKF}) which was left open in \cite[\S 7]{AlgEkfs2003}.

By adapting a result from Moriyama \textit{et al.}~\cite[Theorem 4.2]{AlgEkfs2003}, it is not hard to show that $\gammalb$ is positive under Assumption \ref{assum-gradfi}. However, Moriyama \textit{et al.} do not provide an explicit lower bound on $\gammalb$ (in terms of the problem constants $m$, $c$, $C$ and $M$). For estimating an explicit lower bound, we will need the following lemma.
 
\begin{lemma}\label{lemm-Bi-growth} Suppose that Assumptions \ref{assum-compact}, \ref{assum-strong-cvx}, \ref{assum-stepsize} and \ref{assum-gradfi} hold. Let the iterates $\{x_1^k\}_{k=1}^\infty$ be generated by the IN method \eqref{eq-inner-update}--\eqref{eq-hessian-update-convention}. Then, for each $i \in \{1,2,\dots,m\}$, we have
\beq\label{ineq-inner-iter-distance-bound-by-grad} \| x_i^k - x_1^k \| \leq \gamma^k \bik(\phi)  \|\nabla f(x_1^k)\|, \quad \mbox{for all} \quad k\geq 2,
\eeq 
where $\bik(\phi)$ is given by the recursion \beq B_{i+1}^k(\phi) =  \bigg(1+ \frac{2Q}{m} \max(1/k,\phi)\bigg)\bik(\phi) + \frac{2M}{cm} \quad \mbox{and} \quad B_1^k = 0,\label{eq-Bik-phi} \eeq 
and the limits $ B_i(\phi) : = \lim_{k \to \infty} B_i^k(\phi) $
satisfy 
 \beq\label{eq-recursive-B-phi} B_{i+1}(\phi) =  \bigg(1+ \frac{2Q}{m}\phi\bigg)B_i(\phi) + \frac{2M}{cm} \quad \mbox{and} \quad B_1(\phi) = 0,
 \eeq 
where $\phi$ is given by \eqref{def-phi}.
\end{lemma}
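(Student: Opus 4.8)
The plan is to prove the bound \eqref{ineq-inner-iter-distance-bound-by-grad} by induction on $i$, with the recursion \eqref{eq-Bik-phi} for $B_i^k(\phi)$ emerging naturally from a one-step analysis of the inner iteration. Write $d_i^k := \|x_i^k - x_1^k\|$ and $g := \|\nabla f(x_1^k)\|$ for brevity. The base case $i=1$ is immediate since $d_1^k = 0 = B_1^k$. For the inductive step I would start from the single inner update \eqref{eq-inner-update}, which gives $x_{i+1}^k - x_i^k = -\alpha^k D_i^k \nabla f_i(x_i^k)$, and apply the triangle inequality to obtain
$$ d_{i+1}^k \le d_i^k + \alpha^k \|D_i^k\| \, \|\nabla f_i(x_i^k)\|. $$
The factor $\alpha^k \|D_i^k\|$ is then bounded using $\alpha^k = k\gamma^k$ together with the estimate $D_i^k \preceq \tfrac{2}{ckm}I$ from \eqref{ineq-hessian-growth-k-geq-2} (valid for $k\ge 2$), which yields $\alpha^k\|D_i^k\| \le \tfrac{2\gamma^k}{cm}$.

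The crucial step, and the one that fixes the constants of the recursion, is the estimate of $\|\nabla f_i(x_i^k)\|$. Rather than applying Assumption \ref{assum-gradfi} directly at $x_i^k$ (which would bring in the full Lipschitz constant $Cm$ of $\nabla f$ and spoil the multiplicative factor), I would split $\nabla f_i(x_i^k) = \nabla f_i(x_1^k) + [\nabla f_i(x_i^k) - \nabla f_i(x_1^k)]$, apply the gradient growth condition \emph{at} $x_1^k$ to get $\|\nabla f_i(x_1^k)\| \le M g$, and use the $C$-Lipschitzness of the single gradient $\nabla f_i$ (from \eqref{eq-hessian-bounds}) to get $\|\nabla f_i(x_i^k) - \nabla f_i(x_1^k)\| \le C d_i^k$, so that $\|\nabla f_i(x_i^k)\| \le Mg + C d_i^k$. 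Substituting both bounds and using $C/c = Q$ yields
$$ d_{i+1}^k \le \Big(1 + \tfrac{2Q}{m}\gamma^k\Big) d_i^k + \tfrac{2M}{cm}\gamma^k g. $$

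To close the induction I would invoke the stepsize bound $\gamma^k \le \max(1/k,\phi)$, which follows from Assumption \ref{assum-stepsize} ($1 \le \alpha^k \le \max(1,\alpha_*^k)$) combined with $\gamma_*^k \le \phi$ from Lemma \ref{lemm-alphastar-bound}: if $\alpha_*^k \le 1$ then $\alpha^k = 1$ and $\gamma^k = 1/k$, while otherwise $\gamma^k \le \gamma_*^k \le \phi$. Replacing $\gamma^k$ by $\max(1/k,\phi)$ in the multiplicative coefficient (legitimate since $B_i^k \ge 0$) and inserting the inductive hypothesis $d_i^k \le \gamma^k B_i^k g$ gives exactly $d_{i+1}^k \le \gamma^k B_{i+1}^k(\phi) g$ with $B_{i+1}^k(\phi)$ as in \eqref{eq-Bik-phi}. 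Finally, the limiting recursion \eqref{eq-recursive-B-phi} follows by letting $k\to\infty$ in \eqref{eq-Bik-phi}: since $\max(1/k,\phi)\to\phi$ (indeed $\max(1/k,\phi)=\phi$ for every $k \ge 1/\phi$) and each $B_i^k(\phi)$ is produced by finitely many applications of a recursion that depends continuously on this coefficient and starts from $B_1^k=0$, the limits $B_i(\phi)$ exist and satisfy \eqref{eq-recursive-B-phi}. The only genuine obstacle is recognizing the need to evaluate the gradient growth condition at $x_1^k$ rather than at $x_i^k$, so that the factor $2C/(cm)=2Q/m$ appears instead of a larger one; the remaining estimates are routine.
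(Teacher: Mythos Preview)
Your proposal is correct and follows essentially the same route as the paper's proof: induction on $i$, the triangle inequality combined with the inner update \eqref{eq-inner-update} and the bound $D_i^k\preceq \tfrac{2}{ckm}I$ from \eqref{ineq-hessian-growth-k-geq-2}, the split $\nabla f_i(x_i^k)=\nabla f_i(x_1^k)+[\nabla f_i(x_i^k)-\nabla f_i(x_1^k)]$ with Assumption~\ref{assum-gradfi} applied at $x_1^k$ and $C$-Lipschitzness of $\nabla f_i$, and finally $\gamma^k\le\max(1/k,\phi)$ to pass from the $\gamma^k$-dependent recursion to \eqref{eq-Bik-phi}. The only cosmetic difference is the order in which you insert the inductive hypothesis versus the $\gamma^k\le\max(1/k,\phi)$ bound; the paper applies the inductive hypothesis twice (once for $d_j^k$ in the triangle inequality and once for the Lipschitz term) before invoking the stepsize bound, but the resulting inequality is identical.
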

\begin{proof} Fix $k$. We will proceed by induction on $i$. For $i=1$, the left-hand side of \eqref{ineq-inner-iter-distance-bound-by-grad} is zero, so the result holds. For simplicity of the notation, we will write $B_i^k$ for $B_i^k(\phi)$. Suppose that \eqref{ineq-inner-iter-distance-bound-by-grad} holds for all $1 \leq i \leq j \leq m$. Then,
\beqa
	\|x_{j+1}^k - x_1^k \| 
	&\leq& B_j^k \gamma^k \|\nabla f(x_1^k)\|  + \|x_{j+1}^k - x_j^k \| \nonumber \\
	&\leq & B_j^k \gamma^k \|\nabla f(x_1^k)\|  + \frac{2\gamma^k}{cm} \big\| \nabla f_j (x_j^k)\big\|  \nonumber \\
	&\leq & B_j^k \gamma^k \|\nabla f(x_1^k)\|  + \frac{2\gamma^k }{cm} \bigg( M \big\| \nabla f (x_1^k)\big\| + \hessub \big\| x_j^k - x_1^k \big\| \bigg) \nonumber \\
	&\leq & B_j^k \gamma^k \|\nabla f(x_1^k)\|  + \frac{2\gamma^k }{cm} \bigg( M \big\| \nabla f (x_1^k)\big\| + \hessub B_j^k \gamma^k \|\nabla f(x_1^k)\| \bigg) \nonumber \\
	&= & \bigg( B_j^k \big(1 + \frac{2Q}{m} \gamma^k\big) + \frac{2M}{cm}\bigg) \gamma^k \|\nabla f(x_1^k)\|, \label{ineq-inner-dist-bd-by-grad-helper}
\eeqa
where we used the induction hypothesis in the first and the fourth inequality, the inner update equation \eqref{eq-inner-update} for relating the distance between inner iterates to gradients and Lemma \ref{eq-iter-hess-bound} for bounding the norm of $(H_j^k)^{-1}$ in the second inequality, and the third inequality follows from Assumption \ref{assum-gradfi} on the gradient growth, the triangle inequality over the gradients and \eqref{eq:gradient-Lipschitzness} on the Lipschitzness of the gradients. Using Assumption \ref{assum-stepsize} on the variable stepsize and the bound on the normalized stepsize from Lemma \ref{lemm-alphastar-bound}, we have for any $k\geq 1$, 
 $$\gamma^k \leq  \max(1/k,\gamma_*^k)\leq \max(1/k, \phi),$$
which, once combined with \eqref{ineq-inner-dist-bd-by-grad-helper}, implies that the inequality \eqref{ineq-inner-iter-distance-bound-by-grad} is true for $i=j+1$. This completes the induction-based proof of the equality \eqref{eq-Bik-phi}. Then, \eqref{eq-recursive-B-phi} follows directly from \eqref{eq-Bik-phi} by taking the limit as $k\to \infty$ and using the fact that $\phi>0$. \qed
\end{proof}

We use the preceding result to provide a lower bound on the asymptotic behavior of the normalized stepsize. 
\begin{theorem}\label{thm-lower-bound-stepsize-growth} \textbf{(Asymptotic stepsize behavior)}
Suppose that Assumptions \ref{assum-compact}, \ref{assum-strong-cvx}, \ref{assum-stepsize} and \ref{assum-gradfi} hold. Then, there exists a constant $\kappa$ such that 
\beq\label{ineq-lower-bound-on-stepsize-growth} 0 < \kappa  \leq \gammalb.
\eeq
Furthermore, if $\phi < \frac{1}{B(\phi)C}$ where $\phi$ is defined by \eqref{def-phi}, 
  \beq\label{def-B-phi-as-a-sum} B(\phi):=\sum_{j=2}^m B_j(\phi) \eeq 
and $B_j(\phi)$ is given by \eqref{eq-Bik-phi}, then a choice of  
\beq\label{eq-liminf-stepsize-bound}
\kappa = \phi \frac{1}{Q^2} \frac{1}{ \frac{2B(\phi)C}{1-B(\phi)C\phi} + 1} 
\eeq
satisfies \eqref{ineq-lower-bound-on-stepsize-growth}.
\end{theorem}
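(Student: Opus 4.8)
The plan is to start from the closed form for $\alpha_*^k$ obtained in the proof of Lemma~\ref{lemm-alphastar-bound}. Writing $\gamma_*^k=\alpha_*^k/k$ and substituting the inner-iterate representation \eqref{eq-iter-ver-compact}, for $k\ge2$ we have
\[
\gamma_*^k=\frac{1-\eta}{Ck}\cdot\frac{(h_m^k)^TD_m^kh_m^k}{\|D_m^kh_m^k\|\sum_{i=2}^m\|D_{i-1}^kh_{i-1}^k\|+\frac m2\|D_m^kh_m^k\|^2}.
\]
The goal is to bound the numerator below and the denominator above so that a strictly positive, $k$-independent constant survives. Setting $d:=\|D_m^kh_m^k\|$, I would bound the numerator using $H_m^k\succeq ckm\,I$ (from \eqref{eq-hessian-growth-inner-steps}), giving $(h_m^k)^TD_m^kh_m^k=(D_m^kh_m^k)^TH_m^k(D_m^kh_m^k)\ge ckm\,d^2$. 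For the denominator, the identity $x_i^k-x_1^k=-\alpha^kD_{i-1}^kh_{i-1}^k$ turns Lemma~\ref{lemm-Bi-growth} into $\|D_{i-1}^kh_{i-1}^k\|=\|x_i^k-x_1^k\|/\alpha^k\le B_i^k(\phi)\|\nabla f(x_1^k)\|/k$ (using $\gamma^k/\alpha^k=1/k$), so that, with $B^k:=\sum_{j=2}^mB_j^k(\phi)$, the denominator is at most $dB^k\|\nabla f(x_1^k)\|/k+\frac m2 d^2$.

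The essential estimate is a lower bound on $d$ in terms of $\|\nabla f(x_1^k)\|$, which comes from controlling how far $h_m^k$ sits from the true gradient. Splitting $h_m^k-\nabla f(x_1^k)=\sum_j(\nabla f_j(x_j^k)-\nabla f_j(x_1^k))+\frac1{\alpha^k}\sum_j\nabla^2f_j(x_j^k)(x_1^k-x_j^k)$ and using \eqref{eq:gradient-Lipschitzness}, the Hessian bound \eqref{eq-hessian-bounds}, Lemma~\ref{lemm-Bi-growth}, and the cancellation $\gamma^k/\alpha^k=1/k$ on the second-order term, I get
\[
\|h_m^k-\nabla f(x_1^k)\|\le CB^k\big(\gamma^k+\tfrac1k\big)\|\nabla f(x_1^k)\|.
\]
The point of separating the two sums is that the first-order part is controlled by $C\gamma^kB^k$ while the Hessian part carries an extra $1/\alpha^k$ and hence decays like $1/k$; this is exactly what produces the clean factor $1-B(\phi)C\phi$ rather than a factor $2$. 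Writing $\beta^k:=1-CB^k(\gamma^k+1/k)$ gives $\|h_m^k\|\ge\beta^k\|\nabla f(x_1^k)\|$ and hence $d\ge\|h_m^k\|/(Cmk)\ge\beta^k\|\nabla f(x_1^k)\|/(Cmk)$ from $H_m^k\preceq Cmk\,I$.

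Assembling these, $\gamma_*^k\ge\frac{1-\eta}{Ck}\cdot\frac{ckm\,d}{B^k\|\nabla f(x_1^k)\|/k+\frac m2 d}$; since the right-hand side is increasing in $d$, substituting the lower bound for $d$ and simplifying yields
\[
\gamma_*^k\ge\frac{(1-\eta)c\,\beta^k}{C^2\big(B^k+\beta^k/(2C)\big)}.
\]
For the qualitative claim $\gammalb>0$ I would argue by contradiction: if $\liminf_k\gamma_*^k=0$ then along a subsequence $\gamma_*^{k_\ell}\to0$, whence $\gamma^{k_\ell}\le\max(1/k_\ell,\gamma_*^{k_\ell})\to0$, so $\beta^{k_\ell}\to1$ and the displayed bound forces $\gamma_*^{k_\ell}$ above a fixed positive number, a contradiction. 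For the explicit constant I would instead use the uniform bound $\gamma^k\le\phi$ valid for large $k$ (Assumption~\ref{assum-stepsize} and Lemma~\ref{lemm-alphastar-bound}), so that $\beta^k\to1-B(\phi)C\phi$, which is positive precisely under the hypothesis $\phi<1/(B(\phi)C)$; passing to the $\liminf$ and using $(1-\eta)=\phi/(2Q)$ and $Q=C/c$, the right-hand side collapses to \eqref{eq-liminf-stepsize-bound}. The main obstacle is the apparent circularity in the estimate for $\|h_m^k\|$ (the bound on $\gamma_*^k$ depends on $\gamma^k$, the very quantity being bounded); this is resolved by the $O(1/k)$ decay of the second-order term together with the two regimes above—subsequential $\gamma^{k_\ell}\to0$ for positivity, and the uniform $\gamma^k\le\phi$ for the explicit bound—and by checking that $\beta^k$ remains positive so that the monotonicity substitution in $d$ is legitimate.
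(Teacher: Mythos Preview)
Your proposal is correct and follows essentially the same route as the paper: both arguments start from the expression for $\alpha_*^k$, lower-bound the quadratic form via $H_m^k\succeq ckm\,I$, control $\|h_m^k-\nabla f(x_1^k)\|$ by $C(\gamma^k+1/k)B^k(\phi)\|\nabla f(x_1^k)\|$ using Lemma~\ref{lemm-Bi-growth}, feed in $\gamma^k\le\phi$ from Lemma~\ref{lemm-alphastar-bound}, and simplify to~\eqref{eq-liminf-stepsize-bound}. The only organisational difference is that the paper packages the same estimates through the ratio $I_k:=\sum_{j=2}^m\|x_j^k-x_1^k\|/\|h_m^k\|$ and bounds $\limsup_k I_k/\gamma^k$, whereas you track $d=\|D_m^k h_m^k\|$ and substitute its lower bound using monotonicity; the algebra collapses to the same expression. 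One small addition on your side: for the bare positivity statement $\gammalb>0$ the paper simply cites \cite[Theorem~4.2]{AlgEkfs2003}, while your subsequential contradiction argument (if $\gamma_*^{k_\ell}\to0$ then $\gamma^{k_\ell}\to0$, hence $\beta^{k_\ell}\to1$, contradicting the displayed lower bound) gives a self-contained proof and is worth keeping.
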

\begin{proof}
The existence of such $\kappa$ follows by a reasoning along the lines of \cite[Theorem 4.2]{AlgEkfs2003}. To prove the second part, suppose that $\phi < \frac{1}{B(\phi)C}$ and Assumptions \ref{assum-compact}, \ref{assum-strong-cvx}, \ref{assum-stepsize} and \ref{assum-gradfi} hold.  
Using the definition of $h_m^k$ from \eqref{eq-iter-ver-compact2},
\beqa
   \| \nabla f(x_1^k) - h_m^k \| &\leq& \sum_{j=1}^m \bigg( \| \nabla f_j(x_j^k) - \nabla f_j(x_1^k) \| + \frac{1}{\alpha^k} \| \nabla^2 f_j(x_j^k)(x_1^k - x_j^k) \| \bigg) \nonumber \\
   &\leq& C( 1 + 1/\alpha^k) \sum_{j=2}^m  \|x_j^k - x_1^k\|, \label{ineq-grad-approx-error} 
\eeqa
where we used \eqref{eq-hessian-bounds} and \eqref{eq:gradient-Lipschitzness} in the second inequality. Let $k\geq 2$. Using Lemma \ref{lemm-Bi-growth}, we have 
\beq\label{ineq-distance-bd-via-grad-growth-bound}
	\sum_{j=2}^m  \|x_j^k - x_1^k\| \leq \gamma^k B^k(\phi)  \| \nabla f(x_1^k)\| 
\eeq
where \beq\label{def-Bk-phi}
B^k(\phi):=\sum_{j=2}^m B_i^k(\phi); \quad \lim_{k \to \infty} B^k(\phi)=B(\phi).
\eeq 
Combining \eqref{ineq-grad-approx-error} and \eqref{ineq-distance-bd-via-grad-growth-bound}, 
\beqa
	\| h_m^k \| &\geq& \| \nabla f(x_1^k) \| - \| \nabla f(x_1^k) - h_m^k \|  \nonumber \\
	 &\geq& C \bigg( \frac{1}{ B^k(\phi) C\gamma^k} - (1+1/\alpha^k)\bigg)  \sum_{j=2}^m  \|x_j^k - x_1^k\| \nonumber \\
	 &=& C \bigg( \frac{1}{ B^k(\phi) C \gamma^k} - \big(1+\frac{1}{k\gamma^k}\big)\bigg)  \sum_{j=2}^m  \|x_j^k - x_1^k\|, \label{ineq-gradstep-bound} 
\eeqa
where $\gamma^k$ is the normalized stepsize given by \eqref{eq-normalized-step}. By assumption $ B(\phi) C \phi < 1$. Using Lemma \ref{lemm-alphastar-bound}, this implies that $ B(\phi)C \gammaub <1$. Since $\gammalb > 0$ by the first part, the right hand side of \eqref{ineq-gradstep-bound} stays positive for $k$ large enough. Then, by re-arranging \eqref{ineq-gradstep-bound}, there exists $\bar{k}$ such that for $k\geq \bar{k}$,
\beqas
I_k := \frac{\sum_{j=2}^m  \|x_j^k - x_1^k\| }{\| h_m^k \|} 
	&\leq& \frac{B^k(\phi)\gamma^k}{1 - B^k(\phi)C\gamma^k - B^k(\phi) C/k}.
 \eeqas
Thus, 
\beq\label{ineq-Ik-bound}
	\limsup_{k\to\infty} \frac{I_k }{\gamma^k} \leq \frac{B(\phi)}{1-B(\phi)C\gammaub} \leq \frac{B(\phi)}{1-B(\phi)C\phi}, 
\eeq
where we used Lemma \ref{lemm-alphastar-bound} to bound $\gammaub$ and \eqref{def-Bk-phi} for taking the limit superior of the sequence $\{B^k(\phi)\}$. We also have 
\beqa
	\alpha_*^k 
	&\geq&  \frac{1-\eta}{\hessub} \frac{\hesslb mk (\alpha^k)^2 \| D_m^k h_m^k \|^2}{\alpha^k \| D_m^k h_m^k\| \sum_{j=2}^m \|x_j^{k} - x_1^k\| + (m/2) (\alpha^k)^2  \| D_m^k h_m^k\|^2} \nonumber \\
	&\geq& \frac{(1-\eta)}{\hessub} \frac{\hesslb mk}{I_k /(\alpha^k/Cmk) + m/2} 
	= \bigg(2\frac{(1-\eta)}{Q} \frac{1}{2C (I_k /\gamma^k) + 1}\bigg) k, \label{ineq-gamma-star-k-lb}
\eeqa
where we used Lemma \ref{eq-iter-hess-bound} to bound $H_m^k$ and $D_m^k$ in the first and second inequalities respectively. Combining \eqref{ineq-Ik-bound} and \eqref{ineq-gamma-star-k-lb} and letting $k \to \infty$,
\beqas 
  \gammalb \geq 2\frac{(1-\eta)}{Q} \frac{1}{2C \limsup_{k\to\infty}(I_k /\gamma^k) + 1} 
    &\geq &  \phi \frac{1}{Q^2} \frac{1}{\frac{2B(\phi)C}{1-B(\phi)C\phi} + 1} > 0,
\eeqas
which is the desired result. \qed
\end{proof}
\begin{remark}\label{rema-min-B-bound} Since $\eta \in (0,1)$, we have $\phi \in (0,2Q)$ by the definition \eqref{def-phi} of $\phi$. Taking limits in \eqref{eq-recursive-B-phi}, we obtain
  $$\lim_{\phi \to 0} B_j(\phi) = \frac{2M}{cm}(j-1), \quad \mbox{for} \quad j=1,2,\dots,m.$$
Then, as $B_j(\phi)$ is a monotonically non-decreasing function of $\phi$ for every $j$ (see \eqref{eq-recursive-B-phi}), $B(\phi)$ defined by \eqref{def-B-phi-as-a-sum} is also non-decreasing in $\phi$ satisfying,
  $$B_{min} := \inf_{\phi \in(0,2Q) } B(\phi) = \lim_{\phi \to 0} B(\phi) = \sum_{j=2}^m\frac{2M}{cm}(j-1) = \frac{M(m-1)}{c}  > 0.$$
This shows that $B(\phi)C$ can never vanish so that the condition $\phi < \frac{1}{B(\phi)C}$ in Theorem \ref{thm-lower-bound-stepsize-growth} is well-defined. To see that this condition is always satisfied when $\phi$ is positive and small enough, note that the monotonicity of $B(\phi)$ leads to
   $$ B_{max}:=\sup_{\phi \in(0,2Q) } B(\phi) = \lim_{\phi \to 2Q} B(\phi) < \infty$$
as well. Hence, $\phi \in (0,\frac{1}{B_{max} C})$ always satisfies this condition. 
\end{remark}
\medskip

We now analyze Algorithm IN as an inexact perturbed Newton method. 
Using the representation \eqref{eq-iter-ver-compact} and the formula \eqref{eq-hessian-iterates-decompositon} for the Hessian matrices at the iterates, we can express IN iterations as 
\beq\label{eq-iter-newton-via-error} x_1^{k+1} = x_1^k - \gamma^k (\bar{H}_k)^{-1}\big(\nabla f(x_1^k) + e^k \big) 
\eeq
where \beq\label{def-averaged-hessian} \bar{H}_k := \frac{H_m^k}{k} =  \frac{\sum_{i=1}^k \big( \sum_{j=1}^m \nabla^2 f_j(x_j^i) \big) }{k} = \frac{\sum_{i=1}^k  \nabla^2 f(x_1^i)  }{k} + \widehat{e}^k
\eeq
is the average of the Hessian of $f$ at the previous iterates up to an error term 
\beq\label{eq-hessian-error-in-IN} \widehat{e}^k = \frac{\sum_{i=1}^k  \sum_{j=1}^m \bigg( \nabla^2 f_j(x_j^i) - \nabla^2 f_j(x_1^i)\bigg) }{k}, 
\eeq 
and the gradient error is 
\beq\label{eq-grad-error-alg2}
e^k = \sum_{j=1}^m \bigg(\nabla f_j (x_j^k)-\nabla f_j (x_1^k) + \frac{1}{\alpha^k} \nabla^2 f_j(x_j^k)(x_1^k - x_j^k) \bigg). 
\eeq
Applying \eqref{eq-hessian-bounds} on the Hessian bounds to the first equality in \eqref{def-averaged-hessian}, the Hessian term $\bar{H}_k $ satisfies  
   \beq\label{eq-averaged-hessian-bounds} \hesslb mI \preceq \bar{H}_k \preceq \hessub mI, 
   \eeq 
where $I$ is the $n\times n$ identity matrix and the Hessian error $\widehat{e}^k$ admits the simple bound
  \beq\label{ineq-hessian-error-trivial-bd} \|\widehat{e}^k\| \leq (\hessub - \hesslb)m=cm(Q-1).
  \eeq
We can also bound the gradient error in terms of the norm of the gradient for $k\geq 2$ as
\begin{eqnarray*}
\|e^k\| &\leq & \sum_{j=1}^m \bigg(\| \nabla f_j (x_j^k)-\nabla f_j (x_1^k)\| + \frac{1}{\alpha^k}  \|\nabla^2 f_j(x_j^k)\| \|x_1^k - x_j^k \| \bigg) \\
&\leq& (C + \hessub/\alpha^k) \gamma^k \sum_{j=1}^m B_j^k(\phi)  \| \nabla f(x_1^k) \|  =  {C(\gamma^k + 1/k)} B^k(\phi) \|\nabla f(x_1^k) \|,
\end{eqnarray*}
where we used \eqref{eq-hessian-bounds} on the boundedness of the Hessian matrices, \eqref{eq:gradient-Lipschitzness} on the Lipschitzness of the gradients and Lemma \ref{lemm-Bi-growth} on the distance between the iterates in the second inequality, the (last) equality holds by the definitions \eqref{eq-normalized-step} and \eqref{def-Bk-phi}. This leads to 
\beq\label{eq-limsup-grad-error}
	\limsup_{k\to\infty} \bigg(\| e^k \| / \|\nabla f(x_1^k) \|\bigg) \leq CB(\phi)\gammaub
	\leq CB(\phi)\phi,
\eeq
by Lemma \ref{lemm-alphastar-bound}. 

We prove our rate results using the next theorem regarding sufficient conditions for linear convergence of the inexact perturbed Newton methods of the form
\beqa\label{eq:iter-perturbed-inexact-newton-1}
	(F'(y^k) + \Delta_k)s^k &=& -F(y^k) +\delta_k \\
	y^{k+1} &=& y^k + s^k, \quad y^1 \in \R^n, \label{eq:iter-perturbed-inexact-newton-2}
\eeqa
where the map $F :  \R^n \to \R^n$ and $F'$ denotes the Jacobian matrix of $F$, $\delta^k$ is the perturbation to $F$ and $\Delta^k$ is the perturbation to the Jacobian matrix $F'$. The local convergence of such iterates to a solution $y^*$ satisfying $F(y^*)=0$ is well-studied. Under the following conditions, 
\begin{itemize}
    \item  there exists $y^*$ such that $F(y^*)=0$, \hfill (C1)	
	\item The Jacobian matrix $F'(y^*)$ is invertible, \hfill (C2)
	\item $F$ is differentiable on a neighborhood of $y^*$ and $F'$ is continuous at $y^*$, \hfill (C3)
	\item $\Delta_k$ are such that $F'(y^k) + \Delta_k$ are non-singular for all $k=1,2,\dots$, \hfill (C4)
\end{itemize}
the following local linear convergence result is known in the literature.
\begin{theorem}\label{thm-lin-conv-newton-method} \textbf{(\cite[Theorem 2.2]{Catinas2001Jota}, based on \cite{DemboEisenNewton82})} Assume conditions \emph{(C1)--(C4)} are satisfied. Given $0\leq \xi_k \leq \bar{\xi}<t<1$, $k=1,2,\dots$, there exists $\epsilon>0$ such that if $\| y^1 - y^*\|\leq \epsilon$  and if the iterates $\{y^k\}$ generated by \eqref{eq:iter-perturbed-inexact-newton-1}--\eqref{eq:iter-perturbed-inexact-newton-2} satisfy 
$$ \left\| \Delta_k \bigg( F'(y^k)+\Delta_k \bigg)^{-1} F(y^k) + 
  \bigg( I - \Delta_k ( F'(y^k) + \Delta_k )^{-1} 
  \bigg) \delta_k  \right\| \leq \xi_k \| F(y^k) \|,
$$ 
then the convergence is linear in the sense that 
 $$ \| y^{k+1} - y^* \|_* \leq t \| y^k - y^*\|_*, \quad k=1,2,\dots. $$
 where $\| z \|_* := \| F'(y^*) z \|$.
\end{theorem}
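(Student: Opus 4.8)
The plan is to reduce the perturbed, inexact iteration \eqref{eq:iter-perturbed-inexact-newton-1}--\eqref{eq:iter-perturbed-inexact-newton-2} to a \emph{standard} inexact Newton iteration and then invoke the classical local linear convergence theory for inexact Newton methods. Condition (C1) supplies the root $y^*$ and (C4) guarantees that each step $s^k = (F'(y^k)+\Delta_k)^{-1}(-F(y^k)+\delta_k)$ is well defined. First I would define the residual of the \emph{exact} Newton equation $F'(y^k)s^k = -F(y^k)$ for the computed step, namely
\begin{equation*}
  r^k := F'(y^k) s^k + F(y^k).
\end{equation*}
Substituting $F'(y^k) s^k = -F(y^k) + \delta_k - \Delta_k s^k$ from \eqref{eq:iter-perturbed-inexact-newton-1} and eliminating $s^k$ gives, after a short computation,
\begin{equation*}
  r^k = \Delta_k \big(F'(y^k)+\Delta_k\big)^{-1} F(y^k) + \Big(I - \Delta_k (F'(y^k)+\Delta_k)^{-1}\Big)\delta_k.
\end{equation*}
This is exactly the vector whose norm appears on the left-hand side of the hypothesis, so the assumed bound reads precisely $\|r^k\| \le \xi_k \|F(y^k)\|$. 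In other words, the perturbed step solves the ordinary Newton system up to a residual controlled by the forcing term $\xi_k$, and the perturbations $\Delta_k,\delta_k$ survive only through $\xi_k$.

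Second, I would run the standard contraction argument in the weighted norm $\|z\|_* = \|F'(y^*) z\|$. Writing $e^k = y^k - y^*$, using $F(y^*)=0$ and $F'(y^k)s^k = r^k - F(y^k)$, a direct manipulation of $e^{k+1}=e^k+s^k$ yields
\begin{equation*}
  F'(y^*) e^{k+1} = -\big(F(y^k) - F'(y^*) e^k\big) + r^k + \big(F'(y^*) - F'(y^k)\big) s^k .
\end{equation*}
The first term is the linearization remainder $\omega^k := F(y^k)-F'(y^*)e^k$, which by differentiability (C3) and $F(y^*)=0$ satisfies $\|\omega^k\| = o(\|e^k\|)$. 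For the residual, the same linearization gives $\|F(y^k)\| \le \|e^k\|_* + o(\|e^k\|)$, so $\|r^k\| \le \bar\xi\big(\|e^k\|_* + o(\|e^k\|)\big)$. Invertibility of $F'(y^*)$ (C2) together with continuity of $F'$ (C3) and the Banach perturbation lemma makes $F'(y^k)$ uniformly invertible on a ball around $y^*$, whence $\|s^k\| \le C_0\|e^k\|$ via $s^k = F'(y^k)^{-1}(r^k - F(y^k))$; continuity of $F'$ then makes $\|F'(y^*)-F'(y^k)\|$ as small as desired by shrinking $\epsilon$, so the last term is also $o(\|e^k\|)$. Collecting these estimates gives $\|e^{k+1}\|_* \le \bar\xi\,\|e^k\|_* + o(\|e^k\|)$.

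The step I expect to be the main obstacle is bridging the two norms: the residual is controlled in the Euclidean norm $\|F(y^k)\|$, whereas the contraction must be established in the weighted norm $\|\cdot\|_*$. Making this rigorous requires (a) the equivalence of $\|\cdot\|$ and $\|\cdot\|_*$, valid because $F'(y^*)$ is invertible, to convert every $o(\|e^k\|)$ term into $o(\|e^k\|_*)$, and (b) choosing $\epsilon$ small enough that, uniformly on the ball of radius $\epsilon$, all these lower-order terms are dominated by the available slack $t-\bar\xi>0$. With that choice one obtains $\|e^{k+1}\|_* \le t\,\|e^k\|_*$; since $t<1$ the weighted norm strictly decreases, so by induction the iterates never leave the neighborhood on which all the preceding estimates hold, which closes the argument and delivers the claimed linear rate $\|y^{k+1}-y^*\|_* \le t\,\|y^k-y^*\|_*$.
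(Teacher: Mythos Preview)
Your proposal is correct and follows exactly the route the paper indicates: rewrite the perturbed step as an ordinary inexact Newton step $F'(y^k)s^k=-F(y^k)+r^k$ with $r^k$ equal to the vector in the hypothesis (this is the paper's $\bar\delta_k$), and then appeal to the Dembo--Eisenstat--Steihaug local linear convergence theorem. The only difference is that the paper simply cites \cite{DemboEisenNewton82} for the contraction in the $\|\cdot\|_*$ norm, whereas you spell that argument out.
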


When $\Delta_k = 0$, Theorem \ref{thm-lin-conv-newton-method} says roughly that as long as the perturbation $\delta_k$ is a \textit{relative error} in the right-hand side of \eqref{eq:iter-perturbed-inexact-newton-1}, i.e. $\|\delta_k\|/\| F(y^k) \| \leq \xi_k< \bar{\xi}<1$, we have local linear convergence. This result was first proven by Dembo \textit{et al.} \cite{DemboEisenNewton82} and was later extended to the $\Delta_k \neq 0$ case in \cite{Catinas2001Jota} by noticing that 
\eqref{eq:iter-perturbed-inexact-newton-1} can be re-arranged and put into the form 
 $$F'(y^k)s^k = -F(y^k) + \bar{\delta}_k$$
with 
 $$ \bar{\delta}_k = \Delta_k \bigg( F'(y^k)+\Delta_k \bigg)^{-1} F(y^k) + 
  \bigg( I - \Delta_k ( F'(y^k) + \Delta_k )^{-1} 
  \bigg) \delta_k $$
(see \cite[Theorem 2.2]{Catinas2001Jota}). Then, the result of Dembo \textit{et al.} for the $\Delta_k=0$ case is directly applicable, leading to a proof of Theorem \ref{thm-lin-conv-newton-method}.  

\bigskip
In order to be able to apply Theorem \ref{thm-lin-conv-newton-method}, we rewrite the IN iteration given by \eqref{eq-iter-newton-via-error} in the form of \eqref{eq:iter-perturbed-inexact-newton-1}--\eqref{eq:iter-perturbed-inexact-newton-2} by setting
\beqas
	&&y^k = x_1^k, \quad F(\cdot)=\nabla f(\cdot), \quad \delta_k = -e^k, \quad y^* = x^*, \\
	&&F'(\cdot) = \nabla^2 f(\cdot),\quad \Delta_k = (\bar{H}_k/\gamma^k) - \nabla^2 f(x_1^k).     
\eeqas
In this formulation, the perturbation $\delta^k$ and the gradient error $e^k$ are equal up to a sign factor. The additive Hessian error $\widehat {e}^k$ is similar to $\Delta^k$ but is not exactly the same because $\Delta^k$ has also a division by the normalized stepsize in it. 
  
Note that the previous lower bound on $\kappa$ obtained in Theorem \ref{thm-lower-bound-stepsize-growth} for the growth rate of $\alpha_*^k$ is achieved in the limit as $k$ goes to infinity. But, $\alpha_*^k$ can achieve any growth rate strictly less than $\kappa$, say $\nu k$ for some $\nu \in (0,1)$, in finitely many steps. To capture such growth in stepsize for some finite $k$, we define the following stepsize rule that satisfies Assumption \ref{assum-stepsize}. 
\noindent \begin{assumption}\label{assum-stepsize-simple-with-lin-conv} \textbf{(Variable stepsize with linear growth)} The stepsize $\alpha^k$ depends on the parameters $(\widehat\eta,\widehat\nu,\widehat\kappa)$ and satisfies 
\beqa\label{stepsize-linear-growth} \alpha^k = \alpha^k(\widehat\eta,\widehat\nu,\widehat\kappa) = 
\begin{cases} 
      (\widehat\nu \widehat\kappa) k,   \hfill & \text{ if } 1\leq (\widehat\nu \widehat\kappa) k \leq \max\big(1,\alpha_*^k \big) \\
      1, & \text{ otherwise}, 
  \end{cases}
\eeqa  
for some $\widehat\kappa > 0$ and $\widehat\eta,\widehat\nu \in (0,1)$ where $\alpha_*^k$ is defined by \eqref{def-alpha-star-k} with stepsize control parameter $\eta$ equal to $\widehat{\eta}$.
\end{assumption} 

We argue now how this choice of stepsize with linear growth can lead to linear convergence if parameters $(\widehat\eta,\widehat\nu,\widehat\kappa)$ are chosen appropriately. Suppose that Assumptions \ref{assum-compact} and \ref{assum-strong-cvx} on the boundedness of iterates and Hessian matrices, Assumption \ref{assum-stepsize-simple-with-lin-conv} on the variable stepsize with parameters $(\eta,  \nu,\kappa)$ where $\eta,\nu \in (0,1)$ and $\kappa$ is given by \eqref{eq-liminf-stepsize-bound} and Assumption \ref{assum-gradfi} on the gradient growth hold. Suppose also that\footnote{Remark \ref{rema-min-B-bound} shows that this condition is always satisfied when $\phi$ is small enough. It is adopted both to use the $\kappa$ bound from Theorem \ref{thm-lower-bound-stepsize-growth} (by having $\phi < 1 / \big( B(\phi)C \big)$ and also to keep the normalized stepsize small enough (by having $\phi < 1/Q$ which implies $\gamma^k < 1/Q$ by Lemma \ref{lemm-alphastar-bound}) to control the norm of the perturbations in the estimates \eqref{ineq-limsup} and \eqref{ineq-limsup-inv-newt-error}.} 
\beq\label{condition-on-phi}\phi < \min\bigg(\frac{1}{Q},\frac{1}{B(\phi)C}\bigg)
\eeq where $B(\phi)$ is given by \eqref{def-B-phi-as-a-sum}. Using Lemma \ref{lemm-alphastar-bound} and Theorem \ref{thm-lower-bound-stepsize-growth}, $\alpha_*^k$ grows linearly with an asymptotic rate bounded from below by $\kappa$ and bounded from above by $\phi$ so that the stepsize defined by \eqref{stepsize-linear-growth} with parameters $(\eta,\nu,\kappa)$ satisfies 
 \beq\label{eq-limit-normalized-stepsize-nu-kappa} \lim_{k \to \infty} \gamma^k = \lim_{k \to \infty} \frac{\alpha^k}{k} = \nu \kappa < \limsup_{k \to \infty} \frac{\alpha_*^k}{k} \leq \phi.
    \eeq
%
Note that by \eqref{eq-hessian-bounds} and \eqref{eq-averaged-hessian-bounds} on the boundedness of the Hessian matrices $\nabla^2 f(y^k)$ and the averaged Hessian $\bar{H}_k$, we have 
    \beq\label{bounds-hessian-ratio} \frac{1}{Q}I \preceq \nabla^2 f(y^k) \bar{H}_k^{-1} \preceq Q I. 
    \eeq 
As $\phi < 1/Q$ by the condition \eqref{condition-on-phi}, the inequalities \eqref{eq-limit-normalized-stepsize-nu-kappa} and \eqref{bounds-hessian-ratio} imply that there exists a positive integer $\bar{k}$ such that for $k\geq \bar{k}$, we have $\gamma^k < 1/Q$ and 
\beq\label{ineq-step-with-hessian-bounds} 0 \prec \bigg(1 - \gamma^k Q\bigg) I \preceq \bigg( I - \gamma^k \nabla^2 f(y^k) \bar{H}_k^{-1}\bigg) \preceq \bigg(1 - \frac{\gamma^k}{Q}\bigg)I.
\eeq
Combining \eqref{bounds-hessian-ratio} and \eqref{ineq-step-with-hessian-bounds} leads to 
\beqa\label{ineq-limsup-error-hessian}
\limsup_{k\to\infty} \bigg\| \Delta_k \bigg( F'(y^k)+\Delta_k \bigg)^{-1}\bigg\| &=& \limsup_{k\to\infty} \bigg\| I - \gamma^k \nabla^2 f(y^k) \bar{H}_k^{-1} \bigg\| \nonumber \\
&\leq & 1 - \frac{\liminf_{k \to \infty} \gamma^k}{Q} = 1 - \frac{\nu\kappa}{Q} \label{ineq-limsup}
\eeqa
and similarly to 
\beqa
	\limsup_{k \to \infty} \left\| \bigg( I - \Delta_k \bigg( F'(y^k)+\Delta_k \bigg)^{-1}  \bigg) \right\| &=& \limsup_{k \to \infty}  \bigg\|\gamma^k \nabla^2 f(y^k) \bar{H}_k^{-1} \bigg\| \nonumber \\
&\leq &  \phi Q \label{ineq-limsup-inv-newt-error}
\eeqa
where we used \eqref{eq-limit-normalized-stepsize-nu-kappa} for bounding the limit superior of $\gamma^k$ and \eqref{bounds-hessian-ratio} to bound the norm of the matrix products.
Hence,
\beqa
	\limsup_{k \to \infty} \frac{\left\| \bigg( I - \Delta_k \bigg( F'(y^k)+\Delta_k \bigg)^{-1}  \bigg)\delta_k\right\|}{\|F(y^k)\|} &\leq& \phi Q \limsup_{k \to \infty} (\|\delta^k\|/\|F(y^k)\|) \nonumber \\
	&\leq& \phi^2 QB(\phi)C \label{ineq-limsup-grad-error-inexact-newton}  
\eeqa
where we used \eqref{ineq-limsup-inv-newt-error} in the first inequality and \eqref{eq-limsup-grad-error} in the second inequality. Combining \eqref{ineq-limsup-error-hessian} and \eqref{ineq-limsup-grad-error-inexact-newton}, 
\beqa \eta^\infty :&=& \limsup_{k\to\infty} \frac{\left\| \Delta_k \bigg( F'(y^k)+\Delta_k \bigg)^{-1} F(y^k) + 
  \bigg( I - \Delta_k ( F'(x_k) + \Delta_k )^{-1} 
  \bigg) \delta_k  \right\|}{\|F(y^k)\|} \nonumber \\
  &\leq & 1 - \frac{\nu\kappa}{Q}   
  + \phi^2 QB(\phi)C.  \nonumber \\
  &=& 1 - \phi \frac{\nu}{Q^3} \frac{1}{\frac{2B(\phi)C}{1-B(\phi)C\phi} + 1} + \phi^2 QB(\phi)C := r_{\nu}(\phi),  \label{ineq-limiting-rate}
\eeqa 
where we used the definition of $\kappa$ from \eqref{eq-liminf-stepsize-bound} in the last equality. By Assumption \ref{assum-strong-cvx} on the strong convexity and the regularity of $f$, the conditions (C1)--(C4) are satisfied for $F(\cdot)=\nabla f(\cdot)$. Hence, we can apply Theorem \ref{thm-lin-conv-newton-method}, which says that it suffices to have \beq\label{cond-linear-conv-r-less-than-1} r_{\nu}(\phi) < 1\eeq for local linear convergence. It is straightforward to see from \eqref{ineq-limiting-rate} that this condition is satisfied for $\phi$ positive and around zero as $r_{\nu}(0)=1$ and the derivative $r_{\nu}'(0)<0$. Remembering the assumption \eqref{condition-on-phi}, we conclude that there exists a positive constant $\overline{\phi}_{\nu}$ such that we have linear convergence when
\beq\label{ineq-neces-cond-for-lin-conv}
0 < \phi < \overline{\phi}_{\nu} \leq \min\bigg(\frac{1}{Q}, \frac{1}{B(\phi)C}\bigg).
\eeq
We discuss how $\overline{\phi}_{\nu}$ can be determined later in Remark \ref{rema-upper-bound-phi-bar}. This condition for linear convergence \eqref{ineq-neces-cond-for-lin-conv} is satisfied if 
\beq\label{cond-on-eta-for-lin-conv}
 0 < 1 - \eta < \frac{\overline{\phi}_{\nu}}{2Q},
\eeq
by the definition of $\phi$ in \eqref{def-phi}. Thus, by choosing the stepsize control parameter $\eta \in (0,1)$ close enough to $1$, we can satisfy \eqref{cond-on-eta-for-lin-conv} and hence guarantee local linear convergence of the IN algorithm. These findings provide a proof of the following linear convergence result.

\begin{theorem}\label{thm-local-lin-conv-with-stepsize} \textbf{(Linear convergence with variable stepsize)} Suppose that Assumptions \ref{assum-compact}, \ref{assum-strong-cvx} and \ref{assum-gradfi} hold. Let $\nu \in (0,1)$ be given and the stepsize control parameter $\eta \in (0,1)$ satisfy the inequality \eqref{cond-on-eta-for-lin-conv}. Then, the IN method with the stepsize rule \eqref{stepsize-linear-growth} defined by Assumption \ref{assum-stepsize-simple-with-lin-conv} with parameters $(\eta,\nu,\kappa)$ where $\kappa$ is given by \eqref{eq-liminf-stepsize-bound}
is locally linearly convergent with rate $t$ where $t$ is any number satisfying
$$0 \leq r_{\nu}(\phi) < t < 1,$$
$\phi$ is defined by \eqref{def-phi} and $r_{\nu}(\phi)$ is given by \eqref{ineq-limiting-rate}.
\end{theorem}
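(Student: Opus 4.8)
The plan is to observe that the IN iteration has already been recast in the inexact perturbed Newton form \eqref{eq:iter-perturbed-inexact-newton-1}--\eqref{eq:iter-perturbed-inexact-newton-2}, so the proof amounts to verifying the hypotheses of Theorem \ref{thm-lin-conv-newton-method} and reading off its conclusion. First I would fix the identifications $F(\cdot)=\nabla f(\cdot)$, $y^k = x_1^k$, $y^* = x^*$, $\delta_k = -e^k$, $F'(\cdot)=\nabla^2 f(\cdot)$ and $\Delta_k = \bar{H}_k/\gamma^k - \nabla^2 f(x_1^k)$, so that $F'(y^k)+\Delta_k = \bar{H}_k/\gamma^k$ and \eqref{eq-iter-newton-via-error} becomes exactly \eqref{eq:iter-perturbed-inexact-newton-1}--\eqref{eq:iter-perturbed-inexact-newton-2}. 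Conditions (C1)--(C4) follow at once from Assumption \ref{assum-strong-cvx}: strong convexity of $f$ with parameter $cm$ gives a unique root $x^*$ of $\nabla f$ with invertible, continuous Hessian, while the lower bound $\bar{H}_k/\gamma^k \succeq cmI/\gamma^k \succ 0$ from \eqref{eq-averaged-hessian-bounds} makes $F'(y^k)+\Delta_k$ nonsingular for every $k$.

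The substantive work is to bound the relative error $\eta^\infty$ that drives Theorem \ref{thm-lin-conv-newton-method} and to show it lies strictly below $1$. I would first confirm that the linear-growth stepsize of Assumption \ref{assum-stepsize-simple-with-lin-conv} with parameters $(\eta,\nu,\kappa)$ produces $\gamma^k \to \nu\kappa$: since Lemma \ref{lemm-alphastar-bound} and Theorem \ref{thm-lower-bound-stepsize-growth} sandwich the asymptotic growth rate of $\alpha_*^k$ between $\kappa$ and $\phi$, and $\nu\kappa < \kappa$, the first branch of \eqref{stepsize-linear-growth} is eventually active and \eqref{eq-limit-normalized-stepsize-nu-kappa} holds. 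Then I would estimate the two pieces of $\eta^\infty$ separately. For the Hessian-perturbation piece, the identity $\Delta_k(F'(y^k)+\Delta_k)^{-1} = I - \gamma^k \nabla^2 f(y^k)\bar{H}_k^{-1}$ together with the two-sided bound \eqref{bounds-hessian-ratio} and $\gamma^k < 1/Q$ gives \eqref{ineq-step-with-hessian-bounds}, hence the limsup estimates \eqref{ineq-limsup} and \eqref{ineq-limsup-inv-newt-error}. For the gradient-error piece I would invoke the a priori relative bound \eqref{eq-limsup-grad-error}, $\limsup_k \|e^k\|/\|\nabla f(x_1^k)\| \le CB(\phi)\phi$, which rests on Lemma \ref{lemm-Bi-growth} and the gradient growth Assumption \ref{assum-gradfi}. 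Combining these yields \eqref{ineq-limsup-grad-error-inexact-newton}, and substituting $\kappa$ from \eqref{eq-liminf-stepsize-bound} produces the closed form $\eta^\infty \le r_\nu(\phi)$ in \eqref{ineq-limiting-rate}.

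Finally I would argue that $r_\nu(\phi) < 1$ is attainable. Since $r_\nu(0)=1$ and direct differentiation shows $r_\nu'(0)<0$, there is a threshold $\overline{\phi}_\nu>0$, which may be taken no larger than $\min(1/Q,\,1/(B(\phi)C))$ so that \eqref{condition-on-phi} holds throughout, with $r_\nu(\phi)<1$ for all $\phi\in(0,\overline{\phi}_\nu)$. Because $\phi = 2(1-\eta)Q$, the hypothesis \eqref{cond-on-eta-for-lin-conv} on the control parameter $\eta$ forces $\phi$ into this admissible range; choosing any $\bar\xi$ with $r_\nu(\phi)<\bar\xi<1$ and any $t\in(\bar\xi,1)$, the relative-error condition of Theorem \ref{thm-lin-conv-newton-method} holds for all large $k$, and that theorem delivers local linear convergence with rate $t$. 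The hard part will be the relative gradient-error bound \eqref{eq-limsup-grad-error}: controlling $\|e^k\|$ by a constant multiple of $\|\nabla f(x_1^k)\|$ rather than by an absolute constant is precisely what turns a perturbed Newton step into a genuinely contracting one, and it is where Assumption \ref{assum-gradfi} and the stepsize lower bound of Theorem \ref{thm-lower-bound-stepsize-growth} are indispensable.
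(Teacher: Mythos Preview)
Your proposal is correct and matches the paper's own argument essentially step for step: the same recasting of IN as an inexact perturbed Newton iteration, the same verification of (C1)--(C4), the same limsup estimates \eqref{ineq-limsup}, \eqref{ineq-limsup-inv-newt-error}, \eqref{ineq-limsup-grad-error-inexact-newton} combined into \eqref{ineq-limiting-rate}, and the same $r_\nu(0)=1$, $r_\nu'(0)<0$ argument to produce the threshold $\overline{\phi}_\nu$ that makes \eqref{cond-on-eta-for-lin-conv} sufficient. The only point worth making explicit is that the relative-error bound is a limsup, so Theorem~\ref{thm-lin-conv-newton-method} is applied starting from a sufficiently late iterate, which is consistent with the \emph{local} conclusion; you already note this when you say the condition ``holds for all large $k$''.
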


Our arguments and proof techniques apply also to the EKF method with variable stepsize rule (EKF-S algorithm) which also uses the stepsize defined by Assumption \ref{assum-stepsize} and makes similar assumptions such as the boundedness of iterates and Lipschitzness of the gradients for achieving global convergence \cite{AlgEkfs2003}. Our reasoning with minor modifications leads to the following linear convergence result whose proof is skipped due to space considerations. 
\begin{corollary}\label{coro-linear-conv-EKF} \textbf{(Linear convergence of EKF-S)} Consider the problem \eqref{pbm-multi-agent} with $f_i = \frac{1}{2} g_i^2$ where each $g_i:\R^n \to \R$ is continuously differentiable for $i=1,2,\dots,m$ (non-linear least squares). Consider the EKF-S algorithm of Moriyama \textit{et al.} with variable stepsize $\alpha^k$ \cite{AlgEkfs2003}. 
Let $\nu \in (0,1)$ be given and $\eta \in (0,1)$. Suppose that Assumptions 3.1--3.2, 3.4--3.5 and 4.1 from Moriyama \textit{et al.} \cite{AlgEkfs2003} hold. It follows that there exists constants $\tilde \kappa>0$ and $\tilde \phi_\nu > 0$ such that if 
  $$ 0 <1- \eta < \tilde \phi_\nu,$$
then the EKF-S algorithm with the variable stepsize rule \eqref{stepsize-linear-growth} with parameters $(\eta,\nu,\tilde \kappa)$ given in Assumption \ref{assum-stepsize-simple-with-lin-conv} is locally linearly convergent. 
\end{corollary}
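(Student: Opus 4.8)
The plan is to mirror the proof of Theorem \ref{thm-local-lin-conv-with-stepsize} step by step, replacing the exact component Hessians $\hessfi$ by the Gauss--Newton curvature estimates $\nabla g_i(x_i^k)\,\nabla g_i(x_i^k)^T$ that the EKF-S method accumulates, and then carefully tracking the extra error caused by dropping the second-order residual term. For $f_i=\tfrac12 g_i^2$ one has $\gradfi=g_i\nabla g_i$ and $\hessfi=\nabla g_i\,\nabla g_i^T+g_i\,\nabla^2 g_i$, while EKF-S builds $H_i^k$ from the rank-one matrices $\nabla g_i\,\nabla g_i^T$. First I would re-derive the analogues of Lemma \ref{lemm:conseq-iters} and Lemma \ref{eq-iter-hess-bound}. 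The Kalman-filter representation of a full cycle carries over verbatim, since EKF-S is itself a Kalman update. The two-sided bound $\Theta(k)I\preceq H_m^k\preceq \bigO(k)I$ follows from boundedness of the iterates and the nondegeneracy hypotheses on the Jacobians (Assumptions 3.1--3.2, 3.4--3.5 of \cite{AlgEkfs2003}), which ensure that a full cycle of rank-one updates spans $\R^n$ and is uniformly positive definite even though each individual term is only positive semidefinite.

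Next I would reproduce the stepsize bounds. The upper bound $\gammaub\le\phi$ and the pointwise bound $\gamma_*^k\le\phi$ are algebraic consequences of the two-sided curvature bounds and carry over exactly as in Lemma \ref{lemm-alphastar-bound}, with $\phi=2(1-\eta)Q$ for the condition number $Q$ of the accumulated Gauss--Newton matrices. For the lower bound I would re-run Theorem \ref{thm-lower-bound-stepsize-growth}: the gradient growth condition in the least-squares setting reads $\|g_i\nabla g_i\|\le M\|\nabla f\|$ (Assumption 4.1 of \cite{AlgEkfs2003}), which forces every residual product to vanish at a stationary point. This yields the distance bound $\|x_i^k-x_1^k\|\le \gamma^k \bik(\phi)\|\nabla f(x_1^k)\|$ exactly as in Lemma \ref{lemm-Bi-growth}, and hence a positive lower bound $\tilde\kappa\le\gammalb$ of the same form as \eqref{eq-liminf-stepsize-bound}.

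I would then cast the EKF-S iteration as the inexact perturbed Newton step $x_1^{k+1}=x_1^k-\gamma^k\bar H_k^{-1}(\nabla f(x_1^k)+e^k)$, exactly as in \eqref{eq-iter-newton-via-error}, and bound its two errors. The gradient error $e^k$ is controlled by the distance bound above, giving $\limsup_k \|e^k\|/\|\nabla f(x_1^k)\|\le C B(\phi)\phi$ as in \eqref{eq-limsup-grad-error}. The Hessian perturbation $\Delta_k=\bar H_k/\gamma^k-\nabla^2 f(x_1^k)$ now carries one additional piece relative to the IN case, the averaged dropped residual term $\tfrac1k\sum_{i}\sum_{j} g_j(x_j^i)\,\nabla^2 g_j(x_j^i)$. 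The decisive observation is that this piece is asymptotically negligible: because the gradient growth condition forces $g_j(x^*)=0$ and the iterates converge (the analogue of Theorem \ref{thm-global-conv}, available from \cite{AlgEkfs2003}), the residuals $g_j(x_j^k)\to0$, so the Gauss--Newton curvature becomes an asymptotically exact estimate of $\nabla^2 f$ and $\Delta_k$ retains the limiting behavior used in \eqref{ineq-limsup}--\eqref{ineq-limsup-inv-newt-error}. With both errors bounded, conditions (C1)--(C4) verified for $F=\nabla f$, and $\eta$ chosen so that $\phi$ is small enough to render the limiting relative error $r_\nu(\phi)<1$, Theorem \ref{thm-lin-conv-newton-method} delivers local linear convergence and furnishes the threshold $\tilde\phi_\nu$ on $1-\eta$.

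The main obstacle I anticipate is exactly the control of the Gauss--Newton approximation error in $\Delta_k$: one must argue, \emph{before} linear convergence is known, that the residuals decay fast enough that the dropped second-order term neither spoils the invertibility hypothesis (C4) nor the relative-error bound. This is where the zero-residual consequence of the gradient growth condition is essential, since it lets the already-available global convergence be bootstrapped into asymptotic exactness of the curvature estimate. A secondary subtlety is the positive definiteness of $H_m^k$: unlike $\hessfi\succeq cI$ in the IN method, each Gauss--Newton term is only rank one, so the uniform lower bound cannot be obtained term by term and must instead be extracted from a full cycle under the nondegeneracy assumptions of \cite{AlgEkfs2003}.
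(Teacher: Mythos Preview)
Your proposal is correct and mirrors exactly what the paper intends: the proof there is explicitly skipped with the remark that ``our reasoning with minor modifications leads to the following linear convergence result,'' and your plan---re-running Lemmas \ref{lemm:conseq-iters}, \ref{eq-iter-hess-bound}, \ref{lemm-alphastar-bound}, \ref{lemm-Bi-growth}, Theorem \ref{thm-lower-bound-stepsize-growth}, and the inexact perturbed Newton machinery with the Gauss--Newton rank-one updates in place of $\hessfi$---is precisely that reasoning. You also correctly flag the genuine new subtlety, namely that the per-term lower bound $cI\preceq\hessfi$ is unavailable and the uniform positive-definiteness of $H_i^k$ must instead come from a full cycle via the nondegeneracy hypotheses of \cite{AlgEkfs2003}.

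One simplification: you over-invest in showing that the dropped residual term $g_j\nabla^2 g_j$ is asymptotically negligible. The estimates \eqref{ineq-limsup}--\eqref{ineq-limsup-inv-newt-error} use only the two-sided spectral bounds on $\bar H_k$ and $\nabla^2 f(y^k)$, not any closeness between them; this is exactly the point of Remark \ref{rema-quasi-newton-extension}. So once you have $\tilde c\,mI\preceq\bar H_k\preceq\tilde C\,mI$ from the Moriyama \textit{et al.}\ assumptions, condition (C4) and the relative-error bound go through with a modified $Q$, and the existence of $\tilde\phi_\nu>0$ follows without ever invoking $g_j(x_j^k)\to 0$. The residual-vanishing argument you sketch is valid and yields the sharper constants of Remark \ref{rema-hessian-accurate-at-limit}, but it is not ``decisive'' for the corollary as stated.
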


We continue by several remarks about satisfying the variable stepsize rules we introduced by a simple adaptive stepsize algorithm, improving the convergence rate results with additional regularity assumptions, extensions to incremental quasi-Newton methods, some advantages of an incremental Newton approach over incremental quasi-Newton approaches in our framework and determining the constant $\overline{\phi}_{\nu}$.  

\begin{remark}\label{rema-adaptive-stepsize} By definition, the exact value of $\alpha_*^k$ can only be computed at the end of the $k$-th iteration as it requires the knowledge of $H_m^k$. However, it is possible to guarantee that Assumption \ref{assum-stepsize} on the variable stepsize holds by a simple bisection-type adaptive algorithm as follows: 
\begin{enumerate} 
\item [] \textit{\underline{Adaptive stepsize with bisection:}}
\item  At the start of the $k$-th cycle, i.e. right after $x_1^k$  is available, set $\alpha^k$ to an initial value, say $\alpha^k(j)$ with $j=1$. 
\item  Compute $\alpha_*^k$ (depending on $\alpha^k(j)$) by running one cycle of the algorithm.
\item  If Assumption \ref{assum-stepsize} is satisfied, accept the step, set $\alpha^k=\alpha^k(j)$ and exit. Else, bisect by setting $\alpha^k(j+1)=\max\big(1,\tau \alpha^k(j)\big)$ for some $\tau \in (0,1)$, increment $j$ by 1 and go to step 2.
\end{enumerate} 
There is no risk of an infinite loop during the bisections, as the step $\alpha^k=1$ is always accepted. Theorem \ref{thm-lower-bound-stepsize-growth} shows that when Assumption \ref{assum-gradfi} on the gradient growth holds, by setting $\alpha^k(1) = (\nu \kappa) k$ in the above iteration as in the stepsize rule  \eqref{stepsize-linear-growth} with $\nu \in (0,1)$ and $\kappa$ as in \eqref{eq-liminf-stepsize-bound}, $\alpha^k(1)$ will be immediately accepted requiring no bisection steps, except for finitely many $k$. 
\end{remark}

\begin{remark}\label{rema-quasi-newton-extension} Main estimates used for proving Theorem \ref{thm-local-lin-conv-with-stepsize} are the inequalities \eqref{ineq-limsup} and \eqref{ineq-limsup-inv-newt-error} which require only \eqref{eq-averaged-hessian-bounds} on the boundedness of the averaged Hessian. If one replaces actual Hessians $\nabla^2 f_i(x_i^k)$ with approximate Hessians $\nabla^2 \tilde{f}_i(x_i^k)$ as long as the eigenvalues of $\nabla^2 \tilde{f}_i(x_i^k)$ are bounded (with the same constant for all $i$ and $k$) by a lower bound $\tilde{c}>0$ and an upper bound $\tilde{C}>0$, all these inequalities as well as Theorem \ref{thm-local-lin-conv-with-stepsize} would still hold with $c$ and $C$ replaced by $\tilde{c }$ and $\tilde{C}$. Thus, the IN method admits straightforward generalizations to the incremental quasi-Newton methods while preserving its global convergence and linear convergence results. 
\end{remark}
\begin{remark}\label{rema-hessian-accurate-at-limit}
In the setting of Theorem \ref{thm-local-lin-conv-with-stepsize}, if we assume slightly more regularity than the continuity of the Hessian of $f$ implied by Assumption \ref{assum-strong-cvx}, the Hessian error upper bound \eqref{ineq-hessian-error-trivial-bd} and convergence rates can be improved as follows: Assume that the Hessian of $f$ is not only continuous but also H\"older continuous on the compact set $\mathcal{X}$ defined in Assumption \ref{assum-compact} with some H\"older exponent $\lambda$ and H\"older coefficient $L_\lambda$ satisfying $0<\lambda \leq 1$ and $L_\lambda<\infty$ (reduces to the Lipschitz condition if $\lambda=1$), then the Hessian error bound $\widehat{e}^k$ defined in \eqref{eq-hessian-error-in-IN} satisfies
\begin{eqnarray}  \| \widehat{e}^k \| &\leq& \frac{\sum_{i=1}^k  \sum_{j=1}^m L_\lambda \| x_j^i - x_1^i \|^{\lambda}}{k} \nonumber \\
 & \leq & L_\lambda \frac{\sum_{i=1}^k  \sum_{j=1}^m  \big( 1 + B_j^i(\phi)\big) (1 + \gamma^i) \| \nabla f(x_1^i) \|^{\lambda}}{k} \nonumber \\
 & \leq & L_\lambda   \frac{\sum_{i=1}^k  \sum_{j=1}^m  \big( 1 + B^i(\phi)\big) (1 + \gamma^i) \| \nabla f(x_1^i) \|^{\lambda}}{k} \nonumber \\
 & = & L_\lambda m  \frac{\sum_{i=1}^k \big( 1 + B^i(\phi)\big) (1 + \gamma^i) \| \nabla f(x_1^i)\|^{\lambda}}{k}, \label{ineq-hessian-error-bound-Holder}
\end{eqnarray}
where we used the definition of H\"older continuity in the first inequality, Lemma \ref{lemm-Bi-growth} that provide bounds on the distances between inner iterates together with the fact that $z^\lambda \leq 1 + z$ for $z\geq 0$ and $0<\lambda \leq 1$ in the second inequality and the upper bound $B_j^i(\phi) \leq \sum_{j=1}^m B_j^i(\phi) = B^i(\phi)$ for all $j$ (by the non-negativity of $B_j^i(\phi)$) in the third inequality. Note that the summand term in \eqref{ineq-hessian-error-bound-Holder} satisfies
	\beq\label{ineq-hessian-error-holder-cond} \lim_{i \to \infty} \big( 1 + B^i(\phi)\big) (1 + \gamma^i) \| \nabla f(x_1^i)\|^{\lambda} = 0 
	\eeq
because the gradient term $\nabla f(x_1^i)$ goes to zero by Theorem \ref{thm-global-conv} on the global convergence, the sequence $\{B^i(\phi)\}_{i\geq 1}$ is bounded admitting $B(\phi)<\infty$ as a limit by \eqref{def-Bk-phi} and the normalized stepsize $\gamma^i$ is bounded for any of the stepsize rules we discuss, including Assumptions \ref{assum-stepsize} and \ref{assum-stepsize-simple-with-lin-conv} (see Lemma \ref{lemm-alphastar-bound}). 
Combining \eqref{ineq-hessian-error-bound-Holder} and \eqref{ineq-hessian-error-holder-cond} shows that the Hessian error $\widehat{e}^k$ goes to zero, i.e. 
    \beq\label{ineq-hessian-error-decay-to-zero} \| \widehat{e}^k \|  \to 0, 
    \eeq
improving the trivial bound \eqref{ineq-hessian-error-trivial-bd}. 
Using \eqref{def-averaged-hessian} and the global convergence of $\{x_1^k\}$ to the optimal solution $x^*$ by Theorem \ref{thm-global-conv}, this implies that
\beq\label{fact-hessian-conv}
\nabla^2 f_i(x_1^k) \to \nabla^2 f_i(x^*), \quad \bar{H}_k \to \nabla^2 f(x^*), \quad \nabla^2 f(y^k) \bar{H}_k^{-1} \to I,
\eeq
with $y^k=x_1^k$. This allows us to replace the upper bounds $\big(1 -  \frac{\nu\kappa}{Q}\big)$ in \eqref{ineq-limsup} and $\phi Q$ in \eqref{ineq-limsup-inv-newt-error} with $(1-\nu\kappa)$ and $\phi$ respectively, eliminating some of the $Q$ terms in the condition $r_\nu(\phi) < 1$ for linear convergence (see \eqref{ineq-limiting-rate} and \eqref{cond-linear-conv-r-less-than-1})  and leading to the less restrictive condition 
   $$ 1 - \phi \frac{\nu}{Q^2} \frac{1}{\frac{2B(\phi)C}{1-B(\phi)C\phi} + 1} + \phi^2 B(\phi)C : = \hat{r}_\nu(\phi) < 1 $$
for linear convergence as $\hat{r}_\nu(\phi) \leq r_\nu(\phi)$ (with equality only in the special case $Q=1$). This relaxed condition would not extend to many classes of incremental quasi-Newton methods (such as DFP and its variants) that uses approximations $\nabla^2 \tilde{f}_i$ instead of the true Hessian $\nabla^2 f_i$ because such methods do not lead to an asymptotically correct Hessian approximation, i.e. it is possible that $\nabla^2 \tilde{f} \not\to \nabla^2 f$ (see \cite[Section 4]{DennisMore74}) so that \eqref{fact-hessian-conv} does not always hold. In this sense, using an incremental Newton approach instead of an incremental quasi-Newton approach in our algorithm allows us to get stronger convergence results.    
\end{remark}

\begin{remark}\label{rema-upper-bound-phi-bar}
It is possible to compute $\overline{\phi}_\nu$ as follows. By a straightforward computation, the condition on linear convergence $r_\nu(\phi) < 1$ (see \eqref{cond-linear-conv-r-less-than-1}) where $r_\nu(\phi)$ is given by \eqref{ineq-limiting-rate} is equivalent to 
\beqa
r_\nu(\phi) < 1 &\iff& 0 < \phi < \frac{1}{B(\phi)C}\frac{\nu}{Q^4} \frac{1}{ \frac{2B(\phi)C}{1-B(\phi)C\phi} + 1} \label{ineq-polynomial-cond-for-lin-conv}\\
&&\mbox{  and  } \phi < \min\bigg(\frac{1}{Q}, \frac{1}{B(\phi)C}\bigg)\label{ineq-polynomial-cond-2}  \\
 &\iff& 0 < B(\phi)^2 C^2 \phi^2 - \big(2B(\phi)C+1+\psi\big)B(\phi)C\phi + \psi=:p_1(\phi), \nonumber \\
    &&\mbox{} 0 < \phi: = p_2(\phi), \quad  0 < \frac{1}{Q} - \phi:=p_3(\phi) \quad \mbox{and} \nonumber \\ 
        &&\mbox{} 0 < 1 - \phi B(\phi) C =: p_4(\phi), \nonumber 
\eeqa
with $\psi = \frac{\nu}{Q^4}$. It is straightforward to check that these four inequalities $\left\{0<p_i(\phi)\right\}_{i=1}^4$ are always satisfied for $\phi$ positive and around zero. Furthermore, they are all polynomial inequalities as $B(\phi)$ is a polynomial in $\phi$. Thus, by a standard root-finding algorithm, we can compute all the roots of these four polynomial equations $\left\{0=p_i(\phi)\right\}_{i=1}^4$ accurately and set $\overline{\phi}_\nu$ to the smallest positive root among all the roots. With this choice of $\overline{\phi}_\nu$, the inequalities \eqref{ineq-polynomial-cond-for-lin-conv}--\eqref{ineq-polynomial-cond-2} are satisfied for $0<\phi<\overline{\phi}_\nu$ leading to local linear convergence.
\end{remark}
\section{Linear convergence with constant stepsize}
\label{sec-lin-conv-constant-step}
Consider the IN iteration \eqref{eq-iter-newton-via-error}. In this section, we will analyze the case when $\gamma^k$ is equal to a constant $\gamma$ without requiring the variable stepsize rule (Assumption \ref{assum-stepsize}) to hold. We start with two lemmas that provide bounds on the norm of the difference of gradients at inner iterates 
$x_j^k$ and $x_1^k$, and also on the overall gradient error defined in \eqref{eq-grad-error-alg2}. Note that both these lemmas hold for arbitrary (normalized) stepsizes $\gamma^k$. The first lemma is inspired by Solodov \cite{Solodov98IncrGrad}. 

\begin{lemma}\label{lemm-bound-delta-jk} Suppose that Assumptions \ref{assum-compact} and \ref{assum-strong-cvx} hold. Let $\{x_1^k,x_2^k,\dots,x_m^k\}_{k=1}^\infty$ be the IN iterates generated by \eqref{eq-inner-update}--\eqref{eq-hessian-update-convention}. For any given $k\geq 1$, let 
 \beq\label{def-delta-jk} \delta_j^k := \| \nabla f_j(x_j^k) - \nabla f_j(x_1^k) \|, \quad j=1,2,\dots,m. 
 \eeq
Then, 
  \beq\label{ineq-grad-error} \delta_j^k \leq r^k \sum_{i=1}^{j-1} (1+r^k)^{j-1-i} \| \nabla f_i(x_1^k)\| \quad \mbox{for all} \quad k\geq 2,
  \eeq
with the convention that the right-hand side of \eqref{ineq-grad-error} is zero for $j=1$, where 
    \beq\label{def-rk} r^k = \frac{2Q}{m}\gamma^k
    \eeq  
and $\gamma^k$ is the normalized stepsize defined in \eqref{eq-normalized-step}.
\end{lemma}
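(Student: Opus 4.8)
The plan is to derive a self-referential (recursive) inequality for the quantities $\delta_j^k$ and then solve it in closed form. First I would bound $\delta_j^k$ using the Lipschitz property of the gradients \eqref{eq:gradient-Lipschitzness}, which gives $\delta_j^k \le C\|x_j^k - x_1^k\|$. Next I would control the displacement $\|x_j^k - x_1^k\|$ by telescoping: writing $x_j^k - x_1^k = \sum_{i=1}^{j-1}(x_{i+1}^k - x_i^k)$ and applying the triangle inequality, then substituting the inner update \eqref{eq-inner-update} together with the bound $\|(H_i^k)^{-1}\| \le \frac{2}{ckm}$ from \eqref{ineq-hessian-growth-k-geq-2} (valid for $k \ge 2$, which is precisely why the statement is restricted to $k\ge 2$). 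This yields $\|x_{i+1}^k - x_i^k\| \le \frac{2\gamma^k}{cm}\|\nabla f_i(x_i^k)\|$ after recalling $\gamma^k = \alpha^k/k$.

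The key step is to convert the gradient evaluated at the inner iterate $x_i^k$ back into one evaluated at the cycle start $x_1^k$: by the triangle inequality $\|\nabla f_i(x_i^k)\| \le \|\nabla f_i(x_1^k)\| + \delta_i^k$. Combining the three bounds, using $Q = C/c$ and the definition $r^k = \frac{2Q}{m}\gamma^k$ from \eqref{def-rk}, produces the recursive inequality
$$\delta_j^k \le r^k \sum_{i=1}^{j-1}\big( \|\nabla f_i(x_1^k)\| + \delta_i^k\big), \qquad \delta_1^k = 0.$$
The self-reference through the $\delta_i^k$ terms appearing on the right-hand side is the heart of the argument.

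To finish, I would dominate $\delta_j^k$ by the sequence $S_j$ defined by equality in the above recursion, namely $S_j = r^k\sum_{i=1}^{j-1}\big(\|\nabla f_i(x_1^k)\| + S_i\big)$ with $S_1 = 0$; an immediate induction gives $\delta_j^k \le S_j$. Taking first differences collapses this cumulative-sum recursion into the two-term linear recursion $S_{j+1} = (1+r^k)S_j + r^k\|\nabla f_j(x_1^k)\|$, whose explicit solution is $S_j = r^k\sum_{i=1}^{j-1}(1+r^k)^{j-1-i}\|\nabla f_i(x_1^k)\|$, verified by a one-line induction. This is exactly the claimed bound \eqref{ineq-grad-error}.

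The main obstacle is recognizing that the implicit inequality, in which $\delta_j^k$ depends on all earlier $\delta_i^k$, can be resolved cleanly: the trick is to pass to the dominating sequence satisfying equality and to observe that its defining cumulative-sum relation telescopes into a geometric two-term recursion that is then solved explicitly. Everything else is a routine combination of the Lipschitz bound \eqref{eq:gradient-Lipschitzness}, the telescoping triangle inequality, and the Hessian-inverse bound of Lemma \ref{eq-iter-hess-bound}.
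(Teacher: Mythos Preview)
Your proposal is correct and follows essentially the same approach as the paper: both arguments combine the Lipschitz bound \eqref{eq:gradient-Lipschitzness}, the telescoping triangle inequality on $x_j^k - x_1^k$, the inner update \eqref{eq-inner-update} together with the bound \eqref{ineq-hessian-growth-k-geq-2} on $(H_i^k)^{-1}$, and the substitution $\|\nabla f_i(x_i^k)\| \le \|\nabla f_i(x_1^k)\| + \delta_i^k$, then close the resulting self-referential inequality by induction. The only cosmetic difference is that the paper carries out the induction directly---substituting the hypothesis and simplifying the double sum to the closed form in one pass---whereas you first isolate the recursive inequality, pass to the dominating sequence $S_j$, and solve the resulting two-term recursion; the two presentations are equivalent.
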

\begin{proof}
   Let $k\geq 2$ be given. When $j=1$, $\delta_1^k=0$ so the statement is clearly true. For $j=2$, 
\beqas
	\delta_2^k &\leq & \hessub \| x_2^k - x_1^k \| 
	=  \hessub \alpha^k \big\| D_1^k \nabla f_1(x_1^k) \big\| 
	\leq  r^k  \big\| \nabla f_1(x_1^k)\big\|
\eeqas  
where we used \eqref{eq:gradient-Lipschitzness} on the Lipschitzness of the gradients in the (first) inequality, the representation of inner iterates by the formula \eqref{eq-iter-ver-compact2} in the first equality and Lemma \ref{eq-iter-hess-bound} to bound $D_1^k$ in the second inequality. Thus, the statement is also true for $j=2$. We will proceed with an induction argument. Suppose \eqref{ineq-grad-error} is true for $j=1,2,\dots,\ell$ with $\ell<m$. Then, we have the upper bounds
\beqa \| \nabla f_j (x_j^k)\| &\leq & \| \nabla f_j(x_1^k)\| + \delta_j^k \nonumber \\ 
&\leq &
\| \nabla f_j(x_1^k)\| + r^k \sum_{i=1}^{j-1} (1+r^k)^{j-1-i} \| \nabla f_i(x_1^k)\|, 
\label{ineq-grad-at-inner-iter-bound}
\eeqa
for $j=1,2,\dots,\ell$. We will show that \eqref{ineq-grad-error} is also true for $j=\ell+1$. Using similar estimates as before,
\beqa\label{ineq-distance-bound-inner-iters-start}
  \delta_{\ell+1}^k &\leq & \hessub \| x_{\ell+1}^k - x_1^k \| \\
  &\leq&  \hessub \sum_{i=1}^l \| x_{i+1}^k - x_i^k \| 
   =  \hessub \sum_{j=1}^l \alpha^k \| D_j^k \nabla f_j(x_j^k) \| \nonumber \\
   &\leq&  r^k \sum_{j=1}^\ell \| \nabla f_j(x_j^k) \| \nonumber \\
  & \leq & r^k \sum_{j=1}^\ell \bigg( \|\nabla f_j(x_1^k)\| + r^k \sum_{i=1}^{j-1} (1+r^k)^{j-1-i} \| \nabla f_i(x_1^k)\| \bigg) \nonumber \\
  & = & r^k \sum_{j=1}^\ell (1 + r^k)^{\ell - j} \| \nabla f_j (x_1^k) \| \label{ineq-distance-bound-inner-iters-end}
\eeqa 
where we used \eqref{eq:gradient-Lipschitzness} on the Lipschitzness of the gradients in the first inequality, Lemma \ref{eq-iter-hess-bound} to bound $D_j^k$ terms in the third inequality and \eqref{ineq-grad-at-inner-iter-bound} to bound gradients in the fourth inequality. Thus, the inequality \eqref{ineq-grad-error} is also true for $j=\ell+1$. This completes the proof. \qed
\end{proof}

The next result gives an upper bound on the norm of the gradient errors. 
\begin{lemma}\label{lemm-grad-error-inc-newton} Suppose that Assumptions \ref{assum-compact} and \ref{assum-strong-cvx} hold. The gradient error $e^k$ defined by \eqref{eq-grad-error-alg2} satisfies
 \beq\label{ineq-grad-error-bound-with-gradients} \| e^k \| \leq \bigg( r^k + \frac{2Q}{km}\bigg) \sum_{j=2}^m  \sum_{i=1}^{j-1} (1 + r^k)^{j-1 - i} \| \nabla f_i (x_1^k)\| \quad \mbox{for all} \quad k\geq 2,
 \eeq
where $r^k$ is defined by \eqref{def-rk}.
\end{lemma}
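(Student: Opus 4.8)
The plan is to bound the gradient error $e^k$ by going back to its definition \eqref{eq-grad-error-alg2} and applying the triangle inequality to split it into two pieces: the gradient-difference terms $\nabla f_j(x_j^k) - \nabla f_j(x_1^k)$, which are exactly the quantities $\delta_j^k$ controlled by Lemma \ref{lemm-bound-delta-jk}, and the Hessian-correction terms $\frac{1}{\alpha^k}\nabla^2 f_j(x_j^k)(x_1^k - x_j^k)$. First I would write
\beqas
\|e^k\| \leq \sum_{j=1}^m \bigg( \underbrace{\|\nabla f_j(x_j^k) - \nabla f_j(x_1^k)\|}_{=\,\delta_j^k} + \frac{1}{\alpha^k} \|\nabla^2 f_j(x_j^k)\| \|x_1^k - x_j^k\| \bigg).
\eeqas
For the first summand I would insert the bound \eqref{ineq-grad-error} directly. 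For the second summand I would use the Hessian upper bound $\|\nabla^2 f_j(x_j^k)\| \leq C$ from \eqref{eq-hessian-bounds}, and then the key point is to express $\|x_1^k - x_j^k\|$ in terms of gradient norms.

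The natural way to handle $\|x_1^k - x_j^k\|$ is to reuse the telescoping/distance estimate that already appears inside the proof of Lemma \ref{lemm-bound-delta-jk}: the chain \eqref{ineq-distance-bound-inner-iters-start}--\eqref{ineq-distance-bound-inner-iters-end} shows that $C\|x_{j}^k - x_1^k\|$ is bounded by $r^k \sum_{i=1}^{j-1}(1+r^k)^{j-1-i}\|\nabla f_i(x_1^k)\|$, i.e. the same sum that bounds $\delta_j^k$. More precisely, since $\delta_j^k \leq C\|x_j^k - x_1^k\|$ and the distance bound gives $\|x_j^k - x_1^k\| \leq \frac{1}{C}\, r^k \sum_{i=1}^{j-1}(1+r^k)^{j-1-i}\|\nabla f_i(x_1^k)\|$, I would substitute this with the prefactor $\frac{C}{\alpha^k} = \frac{C}{k\gamma^k}$, producing a term $\frac{C}{\alpha^k}\|x_1^k - x_j^k\| \leq \frac{1}{k\gamma^k}\, r^k \sum_{i}(1+r^k)^{j-1-i}\|\nabla f_i(x_1^k)\|$. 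Recalling $r^k = \frac{2Q}{m}\gamma^k$ from \eqref{def-rk}, the factor $\frac{r^k}{k\gamma^k}$ collapses to $\frac{2Q}{km}$, which is precisely the second piece of the prefactor $\big(r^k + \frac{2Q}{km}\big)$ in \eqref{ineq-grad-error-bound-with-gradients}. Combining this with the $\delta_j^k$ contribution, which carries the factor $r^k$, yields the combined prefactor $r^k + \frac{2Q}{km}$ multiplying the common double sum.

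The last bookkeeping step is the index of summation: the $j=1$ term vanishes because $x_1^k - x_1^k = 0$ and $\delta_1^k = 0$, so the outer sum starts at $j=2$, matching the stated $\sum_{j=2}^m$. I would also note that the inner sum $\sum_{i=1}^{j-1}(1+r^k)^{j-1-i}\|\nabla f_i(x_1^k)\|$ is identical for both contributions, which is what allows the two prefactors to simply add rather than produce two separate sums; this is the clean algebraic coincidence that makes the statement tidy.

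I expect the main (minor) obstacle to be making the substitution of the distance bound rigorous without re-deriving the induction: I would want to cite the intermediate inequality \eqref{ineq-distance-bound-inner-iters-end} from the previous proof rather than re-prove it, being careful that it bounds $C\|x_{\ell+1}^k - x_1^k\|$ and hence $\|x_j^k - x_1^k\|$ for the relevant range of $j$, and that the $k\geq 2$ restriction (needed so that Lemma \ref{eq-iter-hess-bound}'s sharper $D_j^k \preceq \frac{2}{ckm}I$ bound applies inside $r^k$) is carried through. Everything else is triangle inequality and collecting terms.
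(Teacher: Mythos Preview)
Your proposal is correct and follows essentially the same route as the paper: triangle-inequality split of $e^k$ into $\delta_j^k$ and the Hessian-correction term, bounding both via the intermediate distance estimate \eqref{ineq-distance-bound-inner-iters-start}--\eqref{ineq-distance-bound-inner-iters-end} by the common sum $r^k\sum_{i=1}^{j-1}(1+r^k)^{j-1-i}\|\nabla f_i(x_1^k)\|$, and then collapsing $(1+\tfrac{1}{k\gamma^k})r^k$ into $r^k+\tfrac{2Q}{km}$. The paper's proof is exactly this, with the same citation of the chain from Lemma~\ref{lemm-bound-delta-jk} and the same bookkeeping on the $j=1$ term and the $k\geq 2$ restriction.
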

\begin{proof} Using triangle inequality and the upper bound \eqref{eq-hessian-bounds} on the Hessian, the gradient error given by \eqref{eq-grad-error-alg2} admits the bound
\beq\label{eq-grad-error-delta-jk-bound}
	\| e^k \|  \leq \sum_{j=2}^m \bigg( \delta_j^k + \frac{1}{k \gamma^k} \hessub \| x_j^k - x_1^k\| \bigg) \\
\eeq
where $\gamma^k$ is the normalized stepsize defined by \eqref{eq-normalized-step}. By the estimates \eqref{ineq-distance-bound-inner-iters-start}--\eqref{ineq-distance-bound-inner-iters-end}, we have also
  \beq\label{ineq-delta-l-k} \delta_{l+1}^k \leq \hessub \| x_{\ell+1}^k - x_1^k \| \leq r^k \sum_{j=1}^\ell (1 + r^k)^{\ell - j} \| \nabla f_j (x_1^k)\| 
  \eeq
for any $\ell=1,2,\dots,m-1$. Combining \eqref{eq-grad-error-delta-jk-bound} and \eqref{ineq-delta-l-k},
\beqas
 \| e^k \| &\leq &(1 + \frac{1}{k\gamma^k} )r^k \sum_{j=2}^m  \sum_{i=1}^{j-1} (1 + r^k)^{\ell - i} \| \nabla f_i (x_1^k)\| \\
 &=& \bigg( r^k + \frac{2Q}{km}\bigg) \sum_{j=2}^m  \sum_{i=1}^{j-1} (1 + r^k)^{j -1 - i} \| \nabla f_i (x_1^k)\|
\eeqas
as desired.   
\qed
\end{proof}

Lemma \ref{lemm-grad-error-inc-newton} shows that under Assumptions \ref{assum-compact} and \ref{assum-strong-cvx} on the boundedness of the iterates, gradients and Hessian matrices, the gradient error $e^k$ is bounded as long as $\gamma^k$ is bounded and it goes to zero as $k\to \infty$ if $\gamma^k\to 0$. Then, by \cite[Theorem 1, Section 4.2.2]{Pol87}, the iterates $\{x_1^k\}_{k=1}^\infty$ generated by \eqref{eq-iter-newton-via-error}
with a constant stepsize $\gamma^k = \gamma$ converge to an $\varepsilon$-neighborhood of the optimal solution linearly for some $\varepsilon>0$ and the size of the neighborhood shrinks down as the stepsize is decreased, i.e. $\varepsilon \to 0$ as $\gamma \to 0$. This type of result was also achieved for the subgradient method \cite{NedicBert2001IncSubgrad} and the incremental gradient method for least square problems \cite[Section 1.5.2]{Bertsekas99nonlinear}. In this paper, our focus will be the optimal solution rather than approximate solutions.

The next theorem shows that under Assumption \ref{assum-gradfi} on the gradient growth,
we have global linear convergence with a constant stepsize rule if the stepsize is small enough. This is stronger than the local linear convergence obtained for the variable stepsize rule, however unlike the variable stepsize rule, if Assumption \ref{assum-gradfi} does not hold, the constant stepsize rule does not guarantee convergence to the optimal solution but only convergence to an $\varepsilon$-neighborhood of the optimal solution for some $\varepsilon>0$. We first prove a lemma.
\begin{lemma}\label{lemm-global-lin-conv-constant-step} Consider the IN iterates $\{x_1^k\}_{k=1}^\infty$ generated by \eqref{eq-iter-newton-via-error} with a constant stepsize $\gamma^k =\gamma$. Suppose that Assumptions \ref{assum-compact} and \ref{assum-strong-cvx} hold and there exists a positive integer $\hatk$ such that the gradient error $e^k$ defined by \eqref{eq-grad-error-alg2} satisfies
  \beq\label{bound-grad-error-relative} \|e^k\| \leq \p \| \nabla f(x_1^k) \| \quad \mbox{for all} \quad k \geq \hatk, \quad 0 \leq \p < \frac{1}{Q}. \eeq
It follows that there exists constants $\hatg>0$, $\hata > 0$ and $0<\hatr<1$ such that if $0<\gamma<\hatg$, then
   \beq\label{ineq-lin-conv-constants} \|x_1^k - x^*\| \leq \hata \hatr ^k \quad \mbox{for all} \quad k \geq \hatk,
   \eeq
where $x^*$ is the unique optimal solution of the problem \eqref{pbm-multi-agent}.
\end{lemma}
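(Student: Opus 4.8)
The plan is to use the function-value gap $V_k := f(x_1^k) - f(x^*)$ as a Lyapunov function and show it contracts geometrically once $k\geq\hatk$. Write $g^k := \nabla f(x_1^k)$ and $d^k := \bar{H}_k^{-1}(g^k + e^k)$, so that the iteration \eqref{eq-iter-newton-via-error} with $\gamma^k=\gamma$ becomes $x_1^{k+1} = x_1^k - \gamma d^k$. Since each $\nabla^2 f_i \preceq C I$ by \eqref{eq-hessian-bounds}, the gradient $\nabla f$ is Lipschitz with constant $L := Cm$, so the standard quadratic upper bound for functions with Lipschitz gradient gives
\beqs
  f(x_1^{k+1}) \leq f(x_1^k) - \gamma\, (g^k)^T d^k + \tfrac{L\gamma^2}{2}\|d^k\|^2 .
\eeqs

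The first and main step is to lower-bound the descent term $(g^k)^T d^k$. Splitting $d^k = \bar{H}_k^{-1} g^k + \bar{H}_k^{-1} e^k$ and using the averaged-Hessian bounds \eqref{eq-averaged-hessian-bounds}, namely $\bar{H}_k^{-1} \succeq \tfrac{1}{Cm} I$ and $\|\bar{H}_k^{-1}\|\le \tfrac{1}{cm}$, together with the relative error hypothesis \eqref{bound-grad-error-relative}, one obtains
\beqs
  (g^k)^T d^k \;\geq\; \tfrac{1}{Cm}\|g^k\|^2 - \tfrac{\p}{cm}\|g^k\|^2 \;=\; \tfrac{1}{m}\Big(\tfrac{1}{C} - \tfrac{\p}{c}\Big)\|g^k\|^2 \;=:\; a\,\|g^k\|^2 .
\eeqs
This is exactly where the hypothesis enters and is the crux of the argument: the coefficient $a$ is strictly positive \emph{precisely because} $\p < 1/Q = c/C$ (recall \eqref{def-cond-number}), so despite the gradient perturbation $e^k$ the vector $d^k$ remains a genuine descent direction. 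A matching upper bound follows similarly, $\|d^k\|^2 \le \|\bar{H}_k^{-1}\|^2\|g^k+e^k\|^2 \le \tfrac{(1+\p)^2}{(cm)^2}\|g^k\|^2 =: b\,\|g^k\|^2$.

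Combining these three estimates yields, for all $k\geq\hatk$,
\beqs
  V_{k+1} \;\leq\; V_k - \gamma\Big(a - \tfrac{L\gamma b}{2}\Big)\|g^k\|^2 .
\eeqs
Taking $\hatg := 2a/(Lb)$ guarantees that for $0<\gamma<\hatg$ the coefficient $\beta := \gamma\big(a - \tfrac{L\gamma b}{2}\big)$ is strictly positive. Since $f$ is strongly convex with parameter $cm$, the Polyak--{\L}ojasiewicz inequality $\|g^k\|^2 \ge 2cm\,V_k$ applies, giving $V_{k+1} \le (1 - 2cm\,\beta)\,V_k$; shrinking $\hatg$ if necessary so that $q := 1 - 2cm\,\beta \in (0,1)$ (possible because $\beta \le \gamma a \to 0$ as $\gamma\to 0$), we get $V_k \le q^{\,k-\hatk}V_{\hatk}$ for $k\geq\hatk$.

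Finally I would translate back to the iterates using strong convexity once more, $\tfrac{cm}{2}\|x_1^k - x^*\|^2 \le V_k \le q^{\,k-\hatk}V_{\hatk}$, and take square roots to reach
\beqs
  \|x_1^k - x^*\| \;\le\; \sqrt{\tfrac{2 V_{\hatk}}{cm}\, q^{-\hatk}}\;\big(\sqrt{q}\big)^{k}, \qquad k\geq\hatk .
\eeqs
Setting $\hatr := \sqrt{q}\in(0,1)$ and $\hata := \sqrt{\tfrac{2 V_{\hatk}}{cm}\, q^{-\hatk}}>0$ then establishes \eqref{ineq-lin-conv-constants}. The argument is otherwise a routine inexact-Newton descent analysis; the only structural input is the positivity of $a$, which hinges entirely on the relative-error bound $\p<1/Q$.
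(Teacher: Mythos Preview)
Your proof is correct and follows essentially the same route as the paper: both use $V(x)=f(x)-f(x^*)$ as a Lyapunov function, derive the same lower bound $(g^k)^T d^k \ge \tfrac{1}{Cm}(1-\p Q)\|g^k\|^2$ (your $a$) and the same upper bound $\|d^k\|^2\le \tfrac{(1+\p)^2}{(cm)^2}\|g^k\|^2$ (your $b$), and then conclude geometric decrease of $V$. The only cosmetic difference is that the paper packages the final step by invoking \cite[Theorem~4, Section~2.2]{Pol87}, whereas you write out the descent-lemma plus Polyak--{\L}ojasiewicz argument that underlies that reference explicitly.
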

\begin{proof}
   Take $V(x) = f(x) - f(x^*)$ as a Lyapunov function, following the proof of \cite[Theorem 2, Section 4.2.3]{Pol87} closely. The iteration \eqref{eq-iter-newton-via-error} is equivalent to 
   $$ x_1^{k+1} = x_1^k - \gamma s^k, \quad s^k = \bar{D}_k(\nabla f(x_1^k) + e^k \big), \quad \bar{D}^k := (\bar{H}_k)^{-1}, $$
where \beq\label{bounds-aver-hessian-inv} \frac{1}{Cm}I \preceq \bar{D}^k \preceq \frac{1}{cm}I
\eeq
by taking the inverse of the bounds for $\bar{H}_k$ given in \eqref{eq-averaged-hessian-bounds}. Let $\langle \cdot,\cdot \rangle$ denote the Euclidean dot product on $\R^n$. We compute 
  \beqas \langle \nabla V (x_1^k), s^k \rangle &=& \langle \nabla f(x_1^k), \bar{D}_k\nabla f(x_1^k) + \bar{D}_k e^k  \rangle \\
   &\geq & \frac{1}{Cm} \| \nabla f(x_1^k) \|^2 - \frac{1}{cm} \p\| \nabla f(x_1^k) \|^2 =  \frac{1}{Cm} (1-\p Q)\| \nabla f(x_1^k) \|^2 \\
   & \geq & \frac{2}{Q}(1-\p Q) V(x_1^k) \geq 0,
  \eeqas
where we used \eqref{bounds-aver-hessian-inv} for bounding $\bar{D}_k$ from below in the first inequality and the strong convexity with constant $cm$ of $f$ implied by Assumption \ref{assum-strong-cvx} in the second inequality.
Similarly, from \eqref{eq:gradient-Lipschitzness}, it follows that the gradients of $f$ are Lipschitz with constant $Cm$, leading to, for $k \geq \hatk$,
  $$ \|s^k\|^2 \leq \| \bar{D}_k(\nabla f(x_1^k) + e^k \big)\|^2 \leq \frac{(1+\p)^2}{(cm)^2}  \|\nabla f(x_1^k)\|^2 \leq 2Cm \frac{(1+\p)^2}{(cm)^2} V(x_1^k),$$
where we used \eqref{bounds-aver-hessian-inv} to bound $\bar{D}_k$ from above together with the bound \eqref{bound-grad-error-relative} on the gradient error in the second inequality. 
Then, by \cite[Theorem 4, Section 2.2]{Pol87}, there exists constants $\hatg > 0$ and $\rho\in (0,1)$ such that for any $0 < \gamma < \hatg$, the iterations are linearly convergent after the $\hatk$--th step, satisfying
  \beqas f(x_1^k) - f(x^*) &\leq & \big(f(x_1^{\hatk}) - f(x^*)\big)\rho^{k-\hatk} \quad \mbox{for all} \quad k \geq \hatk.
\eeqas
Using the bounds \eqref{eq-hessian-bounds} on the Hessian of $f_i$, we have $cmI \preceq \nabla^2 f(x) \preceq CmI$ for all $x\in \R^n$. This implies the following strong convexity-based inequalities, for all $k \geq \hatk$,
   \beqas \frac{cm}{2}\|x_1^k - x^*\|^2 \leq f(x_1^k) - f(x^*) &\leq&  \frac{Cm}{2} \|x_1^{\hatk} - x^*\|^2 \rho^{k-\hatk} \nonumber \leq \frac{Cm}{2}R^2  \rho^{-\hatk} \rho^{k},
   \eeqas
where $R$ is the diameter of $\mathcal{X}$ defined by \eqref{def-R-diameter}. Hence, \eqref{ineq-lin-conv-constants} holds with $\hatr = {\rho}^{1/2} > 0$ and $\hata=({QR^2\rho^{-\hatk}})^{1/2}> 0$. This completes the proof. \qed         
\end{proof}
\begin{theorem} \textbf{(Linear convergence with constant stepsize)} Consider the iterates $\{x_1^k\}_{k=1}^\infty$ generated by  \eqref{eq-iter-newton-via-error} with a constant stepsize $\gamma^k =\gamma$. Suppose that Assumptions \ref{assum-compact}, \ref{assum-strong-cvx} and \ref{assum-gradfi} hold. Then, there exists a constant $\widetilde{\gamma}$ (depending on $M,\hesslb,\hessub$ and $m$) such that if $0 < \gamma < \widetilde{\gamma}$, the iterates are globally linearly convergent, i.e.,
 \beq\label{ineq-global-lin-conv-constant-step} \|x_1^k - x^*\| \leq A \rho^k, \quad \mbox{for all} \quad k=1,2,\dots,
 \eeq
for some constants $A>0$ and $\rho<1$. 
\end{theorem}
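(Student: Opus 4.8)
The plan is to reduce the theorem to Lemma~\ref{lemm-global-lin-conv-constant-step} by verifying its standing hypothesis---the relative gradient-error bound \eqref{bound-grad-error-relative} with a constant strictly below $1/Q$---and then to promote the tail estimate it produces into a bound valid for all $k$.

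First I would specialize Lemma~\ref{lemm-grad-error-inc-newton} to the constant normalized stepsize $\gamma^k = \gamma$, so that $r^k = r := \tfrac{2Q}{m}\gamma$ is independent of $k$. Invoking Assumption~\ref{assum-gradfi} to replace each $\|\nabla f_i(x_1^k)\|$ by $M\|\nabla f(x_1^k)\|$ in \eqref{ineq-grad-error-bound-with-gradients} gives, for $k \geq 2$,
\beq
  \|e^k\| \leq \left( r + \frac{2Q}{km}\right) M\, S(r)\, \|\nabla f(x_1^k)\|, \qquad S(r) := \sum_{j=2}^m \sum_{i=1}^{j-1}(1+r)^{j-1-i}.
\eeq
Here $S(r)$ is a finite sum, monotone in $r$, with $S(r)\to \tfrac{m(m-1)}{2}$ as $r\to 0$, so the product $r\,S(r)$ tends to $0$ as $\gamma \to 0$.

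The crux is a two-scale smallness argument. Since $r\,M\,S(r)\to 0$ as $\gamma\to 0$, I would pick $\widetilde\gamma$ (depending only on $M,\hesslb,\hessub,m$ through $Q$, $r$ and $S(r)$) so that $r\,M\,S(r) < \tfrac{1}{2Q}$ whenever $0<\gamma<\widetilde\gamma$. Fixing such a $\gamma$, the remaining contribution $\tfrac{2Q}{km}M\,S(r)$ decays like $O(1/k)$, so there is a $\hatk$ beyond which it is below $\tfrac{1}{4Q}$; together these give $\|e^k\| \leq \p\,\|\nabla f(x_1^k)\|$ with $\p = \tfrac{3}{4Q} < \tfrac{1}{Q}$ for all $k \geq \hatk$. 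This is precisely \eqref{bound-grad-error-relative} with a fixed $\p$, so Lemma~\ref{lemm-global-lin-conv-constant-step} applies; after shrinking $\widetilde\gamma$ if necessary so that $\gamma$ also lies below the threshold $\hatg$ that lemma supplies, we obtain constants $\hata>0$ and $0<\hatr<1$ with $\|x_1^k - x^*\|\leq \hata\,\hatr^{\,k}$ for every $k\geq \hatk$.

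Finally, to reach the global statement \eqref{ineq-global-lin-conv-constant-step}, I would absorb the finitely many initial iterates into the constant. By Assumption~\ref{assum-compact} we have $\|x_1^k - x^*\|\leq R$ for all $k$, so setting $\rho = \hatr$ and $A = \max\!\big(\hata,\ \max_{1\leq k < \hatk} R\rho^{-k}\big)$ gives $\|x_1^k - x^*\|\leq A\rho^{k}$ for all $k\geq 1$. The main obstacle is the bookkeeping in the crux step: one must dominate the factor $M\,S(r)$ (which grows with $m$) by the threshold $1/Q$ through the choice of $\gamma$, while simultaneously and uniformly for $k\geq \hatk$ controlling the $O(1/k)$ tail term so that a single constant $\p<1/Q$ works; once that constant is secured, the rest is a routine application of the preceding lemma and a finite adjustment of constants.
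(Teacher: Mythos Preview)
Your proposal is correct and follows essentially the same route as the paper: bound $\|e^k\|$ via Lemma~\ref{lemm-grad-error-inc-newton} combined with Assumption~\ref{assum-gradfi}, split the coefficient into the $r$-part (made small by choosing $\gamma$) and the $O(1/k)$-part (made small by choosing $\hatk$), then invoke Lemma~\ref{lemm-global-lin-conv-constant-step} and absorb the first $\hatk$ iterates using the diameter bound $R$. The only cosmetic differences are that the paper further bounds your $S(r)$ by $m(1+r)^{m-2}$ and makes explicit numerical choices (e.g.\ $\gamma<\tfrac{1}{12MQ^2}$, $\hatk>12MQ^2$, yielding $\p=\tfrac{2}{3Q}$) rather than your $\p=\tfrac{3}{4Q}$.
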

\begin{proof} Under Assumption \ref{assum-gradfi} on the gradient growth, the bound \eqref{ineq-grad-error-bound-with-gradients} implies that the gradient error admits the bound
   \beqas \| e^k \| &\leq & M \bigg( r^k + \frac{2Q}{km}\bigg) \sum_{j=2}^m  \sum_{i=1}^{j-1} (1 + r^k)^{j-1 - i} \| \nabla f (x_1^k)\| \\
    &\leq & M  \bigg( r^k + \frac{2Q}{km}\bigg)  m(1 + r^k)^{m-2} \| \nabla f (x_1^k)\|.
   \eeqas
Let $\hatk$ be the smallest positive integer greater than $12MQ^2$. Assume $\gamma^k = \gamma < \frac{1}{12MQ^2}$ so that $r^k < \frac{1}{6MmQ}$ where $r^k$ is defined by \eqref{def-rk}. Then, for $k\geq \hatk$, we have 
 \beqa\label{ineq-error-cond-const-stepsize} \| e^k \| & < & \frac{1}{3}\frac{1}{Q}(1 + r^k)^{m-2} \| \nabla f (x_1^k)\| 
      \leq  \frac{2}{3}\frac{1}{Q}\| \nabla f (x_1^k)\|,
   \eeqa
if $r^k\leq \sqrt[m]{2}-1$ or equivalently if $\gamma < \frac{m}{2Q}(\sqrt[m]{2}-1)$ by the definition of $r^k$ (see \eqref{def-rk}). 
Combining this with Lemma \ref{lemm-global-lin-conv-constant-step}, we conclude that there exists $\widetilde{\gamma}>0$ such that when $0 <\gamma < \widetilde{\gamma}$, the error bound \eqref{ineq-error-cond-const-stepsize} is satisfied for $k\geq \hatk$ and there exists $\hata>0$ and $\hatr<1$ such that
   $$\|x_1^k - x^*\| \leq \hata \hatr^k \quad \mbox{for all} \quad k \geq \hatk.$$
Then, a choice of $A= \max(\hata,R)/{\hatr^{\hatk}}$ where $R$ is as in \eqref{def-R-diameter} and $\rho=\hatr$ satisfies \eqref{ineq-global-lin-conv-constant-step} which completes the proof. \qed
\end{proof}

\section{An example with sublinear convergence}
\label{sec-examples-sublinear-conv}

In the following simple example, we show that 
the normalized stepsize $\gamma^k$ has to go to zero if Assumption \ref{assum-gradfi} on the gradient growth does not hold, illustrating the sublinear convergence behavior that might arise without Assumption \ref{assum-gradfi}.

\begin{example}\label{example-sublinear-conv} Let $f_1 = 1000 x + \varepsilon x^2$ and $f_2 = -1000x + \varepsilon x^2$ for a fixed $\varepsilon>0$ with $m=2$ and $n=1$. This leads to a quadratic function $f=2\varepsilon x^2$ with a unique optimal solution, $x^* = 0$ and condition number $Q=1$. We have
\beqa\label{eq-example-gradient-hessian-1} \nabla f_1(x) = 1000 + 2\varepsilon x &,& \quad \nabla f_2(x) = -1000 + 2\varepsilon x, \\ 
  \nabla^2 f_1(x)=2\varepsilon , \quad \nabla^2 
   f_2(x)=2\varepsilon&,& \quad 
   H_1^k = 2\varepsilon (2k-1) , \quad H_2^k = 4\varepsilon k \label{eq-example-gradient-hessian-3}.
\eeqa  
Assumption \ref{assum-gradfi} is clearly not satisfied, as the gradients of $f_1$ and $f_2$ do not vanish at the optimal solution 0. Rewriting the IN iterations as an inexact perturbed Newton method as in \eqref{eq-iter-newton-via-error}, we find that
  \beq\label{eq-iter-example-perturbed-form} x_1^{k+1} = x_1^k - \alpha^k \frac{1}{4\varepsilon k}(\nabla f(x_1^k) + e^k ) \eeq
with the gradient error $e^k$ given by the formula \eqref{eq-grad-error-alg2} reducing to 
 \beqa e^k &=&  \nabla f_2 (x_2^k)-\nabla f_2 (x_1^k) + \frac{1}{\alpha^k} \nabla^2 f_2(x_2^k)(x_1^k - x_2^k) \nonumber \\
        &=& -\frac{\alpha^k - 1}{2k - 1} \nabla f_1(x_1^k) 
        = -\bigg(  \frac{\gamma^k }{2} - \frac{1}{2k}\bigg)\frac{2k}{2k-1}  \nabla f_1(x_1^k) \label{eq-grad-error-final-estimate}
 \eeqa
where we used formulas \eqref{eq-example-gradient-hessian-1}--\eqref{eq-example-gradient-hessian-3}, the inner update equation \eqref{eq-inner-update} and the definition of the normalized stepsize $\gamma^k$ in \eqref{eq-normalized-step}.

For global convergence, a necessary condition is to have gradient error $e^k \to 0$. From \eqref{eq-grad-error-final-estimate} and the fact that $\nabla f_1(x_1^k)$ is bounded away from zero around the optimal solution 0, we see that this requires $\gamma^k \to 0$. Hence, we assume $\gamma^k \to 0$. In the special case, if $\alpha^k = 1$ for some $k$, then $e^k = 0$ and the IN iterations \eqref{eq-iter-example-perturbed-form} converges in one cycle, as the quadratic approximations $\tilde{f}_j$ to $f_j$ defined by \eqref{eq-quadratic-approx-to-f} become exact. Assume otherwise that $\alpha^k > 1$ for any $k$, we will show sublinear convergence by a simple classical analysis (the case $\alpha^k < 1$ for all $k$ can be handled similarly).  
Combining \eqref{eq-iter-example-perturbed-form} and \eqref{eq-grad-error-final-estimate} and plugging in the formula \eqref{eq-example-gradient-hessian-1} for the gradient of $f_1$, we can express the IN iteration as
  \beqa x_1^{k+1} 
  &=& \bigg(1 - \frac{\alpha^k}{2k}\bigg) \bigg(1 - \frac{\alpha^k}{2k-1} \bigg) x_1^k + \frac{1000}{2\varepsilon}  \frac{\alpha^k}{ 2k}\frac{\alpha^k - 1}{2k - 1}. \label{example-incr-newt-iter-quadratic} 
  \eeqa
As $\gamma^k = \alpha^k/k \to 0$, there exists a positive integer $\hat k$ such that 
  \beq\label{eq-bounds-on-mk} 1 \geq  m_k := \bigg(1 - \frac{\alpha^k}{2k}\bigg) \bigg(1 - \frac{\alpha^k}{2k-1} \bigg)> 0, \quad \mbox{for all} \quad k\geq \hat k.
  \eeq
Then, from \eqref{example-incr-newt-iter-quadratic} and \eqref{eq-bounds-on-mk}, for $x_1^{\hat k}>0$ and $k \geq \hat k$, we have the lower bounds
\beqa\label{ineq-lower-bd-example-decay} x_1^{k+1} &>& \big(\prod_{j=\bar k}^{k} m_j\big)x_1^{\hat k} > 0, \quad
  x_1^{k+1} > \frac{1000}{2\varepsilon}  \frac{\alpha^k}{ 2k}\frac{\alpha^k - 1}{2k - 1}>0. 
\eeqa
For global convergence, by \eqref{ineq-lower-bd-example-decay}, we need $\prod_{k=1}^\infty m_k = 0$ because otherwise we would have $\limsup_{k\to \infty} \big(\prod_{j=1}^k m_j\big) > \delta$ for some $\delta>0$ and 
any $x_1^{\bar k}$ satisfying $x_1^{\hat k} \geq 1/\delta$ would lead to
 $\limsup_{k\to\infty} x_1^k \geq 1$ which would be a contradiction with global convergence.  
Note that
   \beqas \prod_k m_k = 0 &\iff & \sum_k -\log(m_k) = \infty 
    \iff  \sum_k \gamma^k = \infty.
    \eeqas
where we used $2z \geq -\log(1-z)\geq z$ for $z\geq 0$ around zero and the definition \eqref{eq-normalized-step} of $\gamma^k$. Thus, the sequence $\{\gamma^k\}$, having an infinite sum, cannot decay faster than $1/k^{1+\mu}$ for any $\mu > 0$ and by the lower bound \eqref{ineq-lower-bd-example-decay}, 
convergence to the optimal solution 0 cannot be faster than $O\big(1/k^{2(1+\mu)}\big)$ for any $\mu>0$ and is thus sublinear.
  
\end{example}
\section{Conclusion}\label{sec-discussion-and-future}
We developed and analyzed an incremental version of the Newton method, proving its global convergence with alternative variable stepsize rules under some assumptions.

Furthermore, under a gradient growth assumption, we show that it can achieve linear convergence both under a constant stepsize and a variable stepsize. A by-product of our analysis is the linear convergence of the EKF-S method of \cite{AlgEkfs2003} under similar assumptions. 
Our results admit straightforward extensions to incremental quasi-Newton methods and shed light into their convergence properties as well.

\bibliographystyle{plain}
\bibliography{ismp2015_refs}   


\end{document}